\newcommand\reallywidehat[1]{%
\savestack{\tmpbox}{\stretchto{%
  \scaleto{%
    \scalerel*[\widthof{\ensuremath{#1}}]{\kern-.6pt\bigwedge\kern-.6pt}%
    {\rule[-\textheight/2]{1ex}{\textheight}}
  }{\textheight}%
}{0.5ex}}%
\stackon[1pt]{#1}{\tmpbox}%
}
\theoremstyle{plain}
\newtheorem{theorem}{Theorem}[section]
\newtheorem{proposition}{Proposition}[section]
\theoremstyle{definition}
\newtheorem{definition}{Definition}[section]
\theoremstyle{remark}
\numberwithin{equation}{section}
\renewcommand{\div}{\text{div}}
\newcommand{\dd}{\rm d}
\renewcommand{\u}{{\bf u}}
\newcommand{\m}{{\bf m}}
\newcommand{\x}{{\bf x}}
\newcommand{\M}{{\bf M}}
\newcommand{\R}{{\mathbb R}}
\newcommand{\f}{{\bf f}}
\renewcommand{\v}{{\bf v}}
\newcommand{\ee}{{\bf e}}
\newcommand{\kk}{{\bf k}}
\newcommand{\w}{{\bf w}}
\newcommand{\y}{{\bf y}}
\newcommand{\varphib}{\boldsymbol{\varphi}}
\newcommand{\TP}{{\mathbb T_P}}
\def\be{\begin{equation}}
\def\ee{\end{equation}}
\def\bes{\begin{equation*}}
\def\ees{\end{equation*}}
\def\bc{\begin{cases}}
\def\ec{\end{cases}}
\begin{document}
\title[Kolmogorov-Type Theory, Compressible Turbulence and Inviscid Limit]
{Kolmogorov-Type Theory of Compressible Turbulence and Inviscid Limit of
the Navier-Stokes Equations in $\R^3$}

\author{Gui-Qiang G. Chen \and James Glimm}
\address{Gui-Qiang G. Chen, Mathematical Institute, University of Oxford,
         Oxford, OX2 6GG, UK}
\email{chengq@maths.ox.ac.uk}
\address{James Glimm, Department of Applied Mathematics and Statistics,
         Stony Brook University, Stony Brook, NY 11794-3600, USA}
\email{glimm@ams.sunysb.edu}

\subjclass[2010]{35Q30, 76N10, 76F02, 76N17, 65M12, 35L65}
\date{\today}

\keywords{Inviscid limit, convergence, compressible fluids, Navier-Stokes equations, vanishing
viscosity, compressible Kolmogorov-type hypotheses, turbulence, equicontinuity, existence,
weak solutions, Euler equations, numerical convergence.}
\maketitle

\begin{abstract}
We are concerned with the inviscid limit of the Navier-Stokes equations
to the Euler equations for compressible
fluids in $\R^3$.
Motivated by the Kolmogorov hypothesis (1941) for incompressible
flow, we introduce a
Kolmogorov-type hypothesis for barotropic flows, in which the density and
the sonic speed normally vary significantly.
We then observe that the compressible
Kolmogorov-type hypothesis implies
the uniform boundedness of some
fractional derivatives of the weighted velocity and sonic speed
in the space variables in $L^2$,
which is independent of the viscosity coefficient $\mu>0$.
It is shown that this key observation yields
the
equicontinuity in both space and time
of the density in $L^\gamma$ and the momentum in $L^2$,
as well as the uniform bound of the density in $L^{q_1}$ and the velocity in $L^{q_2}$
independent of $\mu>0$, for some fixed $q_1>\gamma$ and $q_2>2$,
where $\gamma>1$ is the adiabatic exponent.
These results lead to the strong convergence of solutions of the Navier-Stokes equations
to a solution of the Euler equations for barotropic fluids in $\R^3$.
Not only do we offer a framework for mathematical existence theories,
but also we offer a framework for the interpretation of numerical
solutions through the identification
of a function space in which convergence should
take place, with the bounds that are independent of $\mu>0$, that is in the high
Reynolds number limit.
\end{abstract}
\maketitle

\section{Introduction}

\medskip
The purpose of this paper is to establish a framework for the existence
theory for the Euler equations
for compressible fluids.
Motivated by a physically well accepted
hypothesis by Kolmogorov \cite{Kolmogorov1,Kolmogorov2}, \textit{Assumption} (K41),
for incompressible flow,
we introduce a corresponding
compressible Kolmogorov-type hypothesis (CKH)
for barotropic flows, where the density and the sonic speed normally vary significantly,
which play an essential
role in compressible turbulence.
The K41 scaling laws
are derived  from a dimensional analysis which pertains to the Euler equations; see \cite{Bat55}.

A further contribution of this paper is that a weaker version \textit{Assumption} (CKHw) of
\textit{Assumption} (CKH) is sufficient to establish our main result
that the convergence of weak solutions of the Navier-Stokes equations
(in the sense of Definition \ref{dfn:ns} as introduced below)
through a subsequence to a solution of the Euler equations
is guaranteed.
In particular, \textit{Assumption} (CKHw), combined with the uniform energy bound,
for the weak solutions
of the Navier-Stokes
equations
with finite-energy initial data
yields some uniform high integrability of the weak solutions with respect
to the viscosity coefficients ({\it i.e.},  the high Reynolds number).

As is well-known, there are two types of turbulence: driven turbulence by a forcing function
and transient turbulence by (strong) initial conditions. The same flow
(in a turbulent wind tunnel or turbulent flow in a pipe) could be
of either type depending on how the system is modeled:
If the pipe is considered in isolation, the flow might be forced;
if the force is a flow connection from a reservoir to the wind tunnel
or pipe, and the reservoir is part of the model,
then the turbulence arises from initial conditions, and the turbulence
will die out when the reservoir is exhausted and no longer drives the
flow. Thus, the distinction between the two (driven turbulence and transient)
is to some extent a matter of points of view and of modeling convenience.
In this paper, our framework is focused on transient turbulence, though the
forced turbulence is also included.
For some recent developments in the mathematical study of energy dissipation
in body-forced turbulence for incompressible flow, see Constantin-Doering \cite{CD},
Doering-Foias \cite{DF}, and the references cited therein.

Our main result is to address the issue of  function spaces and norms in which the convergence might occur.
The rate of dissipation of kinetic energy, $\epsilon$, in the sense of fully developed turbulence,
{\it i.e.}, in the present context, for the Euler equations, is understood as the cascade
of transfer of energy to progressively smaller scales, according to the K41 theory.
In a Navier-Stokes context,
this dissipation cascade terminates with the dissipation
of kinetic energy into heat, {\it i.e.}, molecular motions.
The question remains as to in what norm or function space to express this dissipation cascade
property and convergence results, and in which the convergence of statistical ensemble
occurs.
Our main result addresses the question of function spaces and norms, which is developed from
our previous work on the incompressible flow in \cite{CG12};
while the distinction among time,
space, space-time, and ensemble averages relates to the well-known ergodic hypothesis
and is out of the scope of the present paper.

We refer to \cite{Sun17} for a theoretical analysis of scaling laws for
fully developed compressible fluid turbulence and to \cite{WanYanShi13}
for highly resolved numerical simulations of compressible turbulence. We
do not attempt to survey the large literature on this topic, but additional
references can be traced from these two. The central complication in
passing from incompressible to compressible turbulence is the presence
of additional dimensionless variables, so that scaling relations,
rather than yielding  a closed form expression for the decay of the
velocity spectrum, as in \textit{Assumption} (K41), now possess an additional dimensionless
expression to be resolved by experiment or simulation. This additional
variable enters in the prefactor, $\epsilon^{\frac{2}{3}}$ in (K41), and does not
affect the exponent $-\frac{5}{3}$. Additionally, there are a variety of choices
for the spectrum to be analyzed. Common choices are $\u$, $\sqrt{\rho}\u$,
and
$\sqrt[3]{\rho}\u$. The scaling exponent in these studies is never larger than
$-\frac{5}{3}$, and is usually smaller. The choice $\sqrt[3]{\rho}\u$ is motivated by
studies of high order structure functions.

Mathematically, our result has the
status of an informed conjecture and mathematically rigorous
consequences of this conjecture. Numerical analysts may find
the framework useful, in view of the many difficulties involved in
assessing the convergence of numerical simulations of
turbulent and turbulent mixing flows.
We expect that many physicists will
accept the conclusions as being correct,
even if unproven mathematically. There has
been some discussion regarding the Kolmogorov exponent $\frac{5}{3}$ which occurs in
\textit{Assumption} (CKH).
We note that the main results (if not the detailed estimates) are
not sensitive to this specific number, and corrections (as conventionally
understood) to it due to intermittency do not affect our result.
In fact, our rigorous argument works for an even weaker version, \textit{Assumption} (CKHw).
Most physicists would accept the stronger hypothesis, which we label (CKHi) ($i$ denotes intermittency),
in which the assumed exponent is larger than
$\frac{5}{3}$.
This distinction has an important consequence as discussed in Section $6$ that, for (CKHi) for incompressible flow,
the energy is constant in time.

Not only do we offer a framework for mathematical existence theories,
but also we offer a framework for the interpretation of numerical solutions
of the Navier-Stokes equations.
Only for very modest problems and with the largest
computers can converged solutions of the Navier-Stokes equations
be achieved. These
solutions are called direct numerical solutions (DNS). For most solutions
of interest to science or engineering, the large eddy simulations
(LES) or Reynolds Averaged Navier-Stokes (RANS) simulations are required.
We discuss here the more accurate LES methodology. Briefly, it
employs a numerical grid which will resolve some but not all of the
turbulent eddies. The smallest of those, below the level of the grid
spacing, are not resolved. However, either (a) the Navier-Stokes equations
are modified with additional subgrid scale (SGS) terms to model the
influence of the unresolved scales on those that are being computed
or (b) the numerical algorithm is modified in some manner to
accomplish this effect in some other way. The present article contributes
to this analysis through the introduction of a function space in
which convergence should take place, with bounds that are independent
of the viscosity coefficients, that is in the high Reynolds number limit.

Because of the common occurrence of high Reynolds numbers in flows of
practical and scientific interest and the need to perform LES simulations
to achieve scientific understanding and engineering designs, we observe
that the existence theories for the Euler equations are relevant to the
mathematical theories of numerical analysis.

We further remark that, for incompressible flow,
Onsager's conjecture \cite{Onsager} states
that weak solutions of the Euler equations for incompressible fluids
in $\R^3$ conserve energy only if they have a certain minimal smoothness of the
order of $\frac{1}{3}$--fractional derivatives in $\x\in\R^3$ and that they dissipate
energy if they are rougher (also see \cite{CCFS}).
For barotropic flows, the appearance of shock waves makes the energy
always dissipative, so weak solutions of the Euler equations for compressible fluids
in $\R^3$ do not conserve energy in general.

More precisely,
consider the following Navier-Stokes equations for compressible fluids in $\R^3$:
\begin{equation}\label{eq:ns}
\left\{\begin{aligned}
&\partial_t\rho+\nabla\cdot (\rho\u)=0,\\
&\partial_t(\rho\u)+\nabla\cdot(\rho \u\otimes \u)
+\nabla p=\nabla\cdot \Sigma +\rho \f,
\end{aligned}\right.
\end{equation}
with Cauchy data:
\begin{equation}\label{1.2}
(\rho, \u)|_{t=0}=(\rho_0(\x), \u_0(\x)),
\end{equation}
where $\rho$ is the fluid density, $\u$ the velocity,  $p$ the pressure,
$\Sigma=\Sigma(\nabla\u)$  the deviatoric stress tensor,
$\nabla$  the gradient with respect to the space variable $\x\in\R^3$,
$\u\otimes\u=(u_iu_j)$  the $3\times 3$ matrix for $\u=(u_1, u_2, u_3)$,
and $\f=\f(t,\x)$  a given external force.
The general form of the stress tenor $\Sigma=\Sigma(\nabla\u)$ is
\begin{equation}\label{1.1}
\Sigma(\nabla\u):=2\mu D(\nabla\u)+\lambda (\nabla\cdot\u) {\bf I}_{3\times 3} \qquad\mbox{with}\,\, D(\nabla\u):=\frac{1}{2}\big(\nabla\u+(\nabla\u)^\top\big),
\end{equation}
where $\max\{\mu, |\lambda|\}\in (0, \mu_0)$ for some $\mu_0>0$, $\lambda+\frac{2}{3}\mu\ge 0$, $\lambda=\lambda(\mu)\to 0$ as $\mu\to 0$,
and ${\bf I}_{3\times 3}$ is the $3\times 3$ identity
matrix. Mathematically, it suffices to require that $\lambda+2\mu>0$ with $\lambda=\lambda(\mu)\to 0$ as $\mu\to 0$.

For ideal barotropic fluids, the pressure-density relation is
\begin{equation}\label{1.2d}
p=p(\rho):=\kappa \rho^\gamma,
\end{equation}
where $\gamma>1$ is the adiabatic exponent and $\kappa>0$ is a constant,
and the internal energy is
\begin{equation}\label{1.3}
e=\frac{p}{(\gamma-1)\rho}=\frac{c^2}{\gamma-1},
\end{equation}
where $c$ is the sonic speed.
Then the total energy is
\begin{equation}\label{1.4}
E=\frac{1}{2}\rho|\u|^2+\rho e
=\rho\big(\frac{1}{2}|\u|^2+\frac{c^2}{\gamma-1}\big)
=\frac{1}{2} |\sqrt{\rho}\u|^2
 +\frac{1}{\gamma-1}\big(\sqrt{\rho}c\big)^2.
\end{equation}

For clarity of presentation, we focus on periodic solutions with
period $\TP=[-\frac{P}{2}, \frac{P}{2}]^3\subset\R^3, P>0$;
that is,
$$
(\rho^\mu, \u^\mu)(t,\x+P{\bf e}_i)=(\rho^\mu, \u^\mu)(t,\x)
$$
with $\{{\bf e}_i\}_{i=1}^3$ the canonic basis in $\R^3$. Other cases
can be analyzed correspondingly.

Throughout this paper, we always assume that $\f\in L^1_{\rm loc}(\R_+; L^{\frac{2\gamma}{\gamma-1}}(\TP))$ is periodic
in $\x$ with period $\TP$.
Assume that $(\rho_0,\u_0)$ are  periodic in $\x\in\R^3$ with period $\TP$ so that
\begin{eqnarray}
&& \rho_0\ne 0, \qquad  \rho_0\ge 0 \,\,\, \mbox{{\it a.e.} in $\Omega$},\label{1.5a}\\
&& \m_0=\rho_0\u_0\in L^{\frac{2\gamma}{\gamma+1}}, \label{1.5aa}
\end{eqnarray}
and that the initial total energy is finite in $\TP$:
\begin{eqnarray}\label{1.5b}
\mathcal{E}_0=\int_{\TP}\Big(\frac{1}{2}\rho_0|\u_0|^2+\frac{p(\rho_0)}{\gamma-1}\Big)\, \dd\x<\infty.
\end{eqnarray}

In this paper, we focus on the following class of periodic weak
solutions $(\rho^\mu, \u^\mu)=(\rho^\mu,\u^\mu)(t,\x)$ with period $\TP$ of
the Cauchy problem \eqref{eq:ns}--\eqref{1.2}.

\begin{definition}[Weak solutions of the Navier-Stokes equations]\label{dfn:ns}
A periodic vector function $(\rho^\mu, \u^\mu)=(\rho^\mu,\u^\mu)(t,\x)$ with period $\TP$ is
a weak solution of the Cauchy problem \eqref{eq:ns}--\eqref{1.2}, provided that
$(\rho^\mu, \u^\mu)$ satisfies the following{\rm :}

\begin{enumerate}
\item[\rm (i)]
For any $T>0$,  $(\rho^\mu,\u^\mu)$ satisfies
the equations in \eqref{eq:ns}
in the sense of distributions in $\R_T^3=[0, T)\times \R^3$, the initial condition \eqref{1.2},
and the following properties:
\begin{eqnarray}
&& \u^\mu\in L^2(0,T; H^1(\TP)), \qquad \nabla\u^\mu\in L^2([0, T)\times \TP),\nonumber\\
&& \rho^\mu\in C([0,T]; L_{\rm w}^\gamma(\TP))\cap L^p([0,T)\times \TP) \qquad \mbox{for $1\le p\le \max\{\frac{5\gamma-3}{3},1\}$},\nonumber\\
&&\m^\mu:=\rho^\mu \u^\mu\in C([0,T]; L_{\rm w}^{\frac{2\gamma}{\gamma+1}}(\TP)), \qquad \rho^\mu|\u^\mu|^2\in L^\infty(0,T;  L^1(\TP)), \label{1.6a}
\end{eqnarray}
where a vector $\v\in C([0,T]; L_{\rm w}^q(\TP))$ means that $\v\in L^\infty(0, T; L^q(\TP))$ and
$\v$ is continuous in $t$ with values in $L^q(\TP)$ endowed with the weak topology;

\smallskip
\item[\rm (ii)] The following inequality holds in the sense of distributions in $\R_T^3$:
\begin{align}
&\partial_t E^\mu+\nabla\cdot\big((E^\mu+p^\mu)\u^\mu\big)
+\mu|\nabla\u^\mu|^2+(\lambda+\mu) |\nabla\cdot \u^\mu|^2
\le\nabla\cdot\big(\Sigma^\mu(\nabla\u^\mu)\u^\mu\big)+\m^\mu\cdot \f.
\nonumber\\
&\label{energy-ineq}
\end{align}
\end{enumerate}
\end{definition}

The global existence of weak solutions of the Cauchy problem \eqref{eq:ns}--\eqref{1.2} in the sense of
Definition {\rm \ref{dfn:ns}}
was first established by P.-L. Lions  \cite{PLions} for $\gamma>\frac{9}{5}$,
and was extended by Feireisl-Novotny-Petzeltova \cite{FNP} for $\gamma\in (\frac{3}{2}, \frac{9}{5}]$.
Some further {\it a priori} estimates and properties of solutions of the Navier-Stokes equations \eqref{eq:ns}
for compressible fluids can be found in \cite{Feireisl,PLions} and the references cited therein.
Thus, throughout the paper, we assume either $\gamma>\frac{3}{2}$ or the existence of weak solutions satisfying (i)--(ii) and
$\gamma > 1$.

The energy inequality \eqref{energy-ineq} implies that
\begin{align}
&\int_{\TP}\Big(\frac{1}{2}|\sqrt{\rho^\mu}\u^\mu|^2
 +\frac{1}{\gamma-1}\big(\sqrt{\rho^\mu} c^\mu\big)^2\Big)(t,\x) \,\dd\x
+\int_0^t\int_{\TP} \big(\mu|\nabla\u^\mu|^2+(\lambda+\mu)|\nabla\cdot\u^\mu|^2\big)\,\dd\x \dd t \nonumber\\
&\le \mathcal{E}_0 +\int_0^t\int_{\TP} \m^\mu\cdot\f \,\dd\x \dd t <\infty,\label{energy-ineq-2}
\end{align}
where $\mathcal{E}_0$ is the initial energy over period $\TP$, independent of the viscosity coefficient $\mu$:
$$
\mathcal{E}_0:=\int_{\TP}\Big(\frac{1}{2}|\sqrt{\rho_0}\u_0|^2
 +\frac{1}{\gamma-1}\big(\sqrt{\rho_0}c_0\big)^2\Big)(t,\x) \,\dd\x.
$$
Notice that there exists $C_\gamma>0$ depending only on $\gamma$ such that
$$
|\m^\mu\cdot \f| \le \frac{1}{2}|\sqrt{\rho^\mu}\u^\mu|^2
 +\frac{1}{\gamma-1}\big(\sqrt{\rho^\mu}c^\mu\big)^2 + C_\gamma |\f|^{\frac{2\gamma}{\gamma-1}}.
$$
Then, for any $T>0$, we use the Gronwall inequality to conclude that there exists a constant $M_T$ that is independent of the viscosity coefficient $\mu$,
but may depend on $T$, $\gamma>1$, $\mathcal{E}_0$, and $\|\f\|_{L^1(0,T; L^{\frac{2\gamma}{\gamma-1}}(\TP))}$, such that
\begin{align}
&\int_{\TP}\Big(\frac{1}{2}|\sqrt{\rho^\mu}\u^\mu|^2
 +\frac{1}{\gamma-1}\big(\sqrt{\rho^\mu}c^\mu\big)^2\Big)(t,\x) \,\dd\x
+\int_0^t\int_{\TP} \big(\mu|\nabla\u^\mu|^2+(\lambda+\mu)|\nabla\cdot\u^\mu|^2\big)\,\dd\x \dd t \nonumber\\[2mm]
&\le M_T<\infty.
\label{energy-ineq-3}
\end{align}
From now on, we always denote $M_T$ as a universal constant, independent of the viscosity coefficient $\mu>0$,
but may depend on $T$, $\gamma>1$, $\mathcal{E}_0$, and
$\|\f\|_{L^1(0, T; L^{\frac{2\gamma}{\gamma-1}}(\TP))}$.

In this paper, we consider all the weak solutions $(\rho^\mu, \u^\mu)$ of the Navier-Stokes equations \eqref{eq:ns}
with initial condition \eqref{1.2} in the sense of
Definition {\rm \ref{dfn:ns}} {\it for all} $\gamma>1$
and study their strong convergence to a weak solution $(\rho, \u)$
of the following Euler equations for barotropic compressible fluids in $\R^3$:
\begin{equation}\label{eq:euler}
\left\{\begin{aligned}
&\partial_t\rho+\nabla\cdot (\rho\u)=0,\\
&\partial_t(\rho\u)+\nabla\cdot(\rho \u\otimes \u)
+\nabla p=\rho \f,
\end{aligned}\right.
\end{equation}
with the same Cauchy data \eqref{1.2} in the following sense:

\begin{definition}[Weak solutions of the Euler equations]\label{dfn:euler}
A periodic vector function $(\rho, \m)=(\rho,\rho\u)(t,\x)$ with period $\TP$ is
a weak solution of the Cauchy problem  \eqref{1.2} and \eqref{eq:euler}, provided that
$(\rho, \m)$ satisfies the following{\rm :}

\begin{enumerate}
\item[\rm (i)] For any $\varphi\in C_0^\infty(\R_+\times \R^3;\R)$,
\begin{equation}\label{euler-1}
\int_0^\infty\int_{\R^3}\big(\rho \varphi_t
+ \m \cdot \nabla \varphi\big)\, \dd\x \dd t
+\int_{\R^3} \rho_0(\x)\varphi(0,\x)\, \dd\x
=0;
\end{equation}

\item[\rm (ii)]
For any $\varphib\in C_0^\infty(\R_+\times \R^3;\R^3)$,
\begin{eqnarray}
&&\int_0^\infty\int_{\R^3}\big(\m\cdot \varphib_t
+ \frac{\m\otimes\m}{\rho}:\nabla \varphib
+ p(\rho)\,\div \varphi
+\rho \f\cdot \varphib\big)\, \dd\x \dd t\nonumber\\
&& +\int_{\R^3} \m_0(\x)\cdot \varphib(0,\x)\, \dd\x
=0,\label{euler-2}
\end{eqnarray}
where $A:B$ is the matrix product $\sum_{i,j} a_{ij}b_{ij}$ for $A=(a_{ij})$
and $B=(b_{ij})${\rm ;}

\item[\rm (iii)]  For  all $t\in (0, \infty)$,
\begin{equation}\label{euler-3}
\int_{\TP} E(t,\x)\, \dd\x
\le \int_{\TP}  E_0(\x)\,\dd\x
+\int_0^t\int_{\TP} \m(s,\x)\cdot\f(s,\x)\, \dd\x \dd s.
\end{equation}
\end{enumerate}
\end{definition}

\smallskip
By contrast, less is known regarding the existence theory for the Euler
equations \eqref{eq:euler} with the Cauchy data \eqref{1.2}
even in the weak sense of Definition \ref{dfn:euler}.
For the compressible case,
the analysis of \cite{GW,Neustupa}
gives weakly convergent subsequences, but
the limit
is not shown to satisfy the original equations, in that the interchange
of limits with nonlinear terms in the equations is not justified in this
analysis. On this basis, we state that the
existence of solutions of the Cauchy problem \eqref{1.2} for the Euler equations \eqref{eq:euler}
in $\R^3$ is open as is the
convergence of the inviscid limit from the Navier-Stokes to the Euler equations.
On the other hand,
the Euler equations are fundamental for turbulence.

\medskip
\section{The Kolmogorov-Type Hypothesis for Compressible Turbulence}

\medskip
From the energy estimate \eqref{energy-ineq-3} of weak solutions
$(\rho^\mu, \u^\mu)$ of
the Navier-Stokes equations \eqref{eq:ns} with finite-energy initial data \eqref{1.2} in the sense
of Definition \ref{dfn:ns},
for any $T>0$,
there exists $M_T>0$ independent of $\mu$ such that
\begin{eqnarray}
\|(\sqrt{\rho^\mu}\u^\mu, \sqrt{\rho^\mu}c(\rho^\mu))\|_{L^\infty (0,T; L^2(\TP))}^2
+\sqrt{\mu} \|\nabla\u^\mu\|_{L^2([0,T)\times\TP)}
\le M_T<\infty.
\label{energy-estimate}
\end{eqnarray}
From now on, $M_T$ is always denoted as a universal constant, independent of $\mu>0$,
but may depend on $T$, $\gamma>1$, $\mathcal{E}_0$,
and $\|f\|_{L^1(0,T; L^{\frac{2\gamma}{\gamma-1}}(\TP)}$.

Then the total energy $\mathcal{E}(t)$ per unit mass at time $t$
for isotropic turbulence is:
\begin{align}
\mathcal{E}(t)=&\,\frac{1}{|\TP|}\int_{\TP}
\Big(\frac{1}{2}|\sqrt{\rho^\mu}\u^\mu|^2+\frac{1}{\gamma-1}\big(\sqrt{\rho^\mu}c(\rho^\mu)\big)^2\Big)\dd\x\nonumber\\[2mm]
=&\, \sum_{k\ge 0} \mathbb{E}(t,k)
=\sum_{k\ge 0} 4\pi q(t,k)k^2. \label{2.2a}
\end{align}
Here $\mathbb{E}(t,k), k=|\kk|$, is the energy wavenumber spectrum, $q(t,k)$ can be interpreted as
the density of contributions in wavenumber space
to the total energy, which is sometimes called the spectral density,
and $\kk=(k_1, k_2, k_3)=\frac{2\pi}{P}(n_1,n_2,n_3)\in \R^3$, with $n_j=0, \pm 1, \pm2, \cdots$, and $j=1,2,3$,
is the discrete wavevector in the Fourier transform:
\begin{eqnarray}\label{2.3a}
(\reallywidehat{\sqrt{\rho}\u},  \reallywidehat{\sqrt{\rho}c(\rho)})(t,\kk)
=\frac{1}{|\TP|}\int_{\TP}(\sqrt{\rho}\u, \sqrt{\rho}c(\rho))(t,\x)e^{-i\kk\cdot\x}\dd\x
\end{eqnarray}
of
the weighted velocity and sonic speed $(\sqrt{\rho}\u, \sqrt{\rho} c(\rho))(t,\x)$ in the $\x$--variable.
Then
$$
(\sqrt{\rho}\u, \sqrt{\rho}c(\rho))(t,\x)
=\sum_{\kk}(\reallywidehat{\sqrt{\rho}\u},
  \reallywidehat{\sqrt{\rho}c(\rho)})(t,\kk)e^{i\kk\cdot\x}
$$

\medskip
We adopt Kolmogorov's assumptions (1941) in his description of isotropic incompressible turbulence
in Kolmogorov \cite{Kolmogorov1,Kolmogorov2} (also see  McComb \cite{McComb}) to
introduce the following
compressible Kolmogorov-type hypothesis (CKH) in mathematical terms:

\bigskip
{\bf Assumption (CKH):} {\it For any $T>0$, there exist $M_T>0$ and $k_*$
{\rm (}sufficiently large{\rm )} independent of the viscosity $\mu$,
which may depend on $T$, $\mathcal{E}_0$, $\gamma>1$,
and $\|\f\|_{L^1(0, T; L^{\frac{2\gamma}{\gamma-1}}(\TP))}$,
such that,
for $k=|\kk|\ge k_*$,
\begin{equation} \label{assumption}
\int_0^T \mathbb{E}(t, k)\dd t \le M_T k^{-\frac{5}{3}}.
\end{equation}
}

\smallskip
For general turbulence, the energy wavenumber spectrum $\mathbb{E}(t,k)$ in \eqref{2.2a}
may be replaced by
$\mathbb{E}(t, k, \phi, \theta)$ in the spherical
coordinates $(k, \phi, \theta)$, $0\le \phi\le \pi$,
$0\le \theta \le 2\pi$, in the $\kk$-space,
but it should be in the same asymptotics as in \eqref{2.2a}
for sufficiently high wavenumber $k=|\kk|$.

\bigskip
For our analysis, the following weaker version of {\it Assumption} (CKHw) is sufficient:

\medskip
{\bf Assumption (CKHw):}
{\it For any $T>0$, there exist $M_T=C_T>0$ and $k_*$
{\rm (}sufficiently large{\rm )}
independent of the viscosity coefficient $\mu$ such that,
for $k=|\kk|\ge k_*$,
\begin{equation} \label{assumption-w}
\sup_{k\ge k_*}\Big(|\kk|^{3+\beta}\int_0^T
\big|(\reallywidehat{\sqrt{\rho}\u},
  \reallywidehat{\sqrt{\rho}c(\rho)})(t,\kk)\big|^2 \dd t\Big)
\le M_T  \qquad\mbox{for some $\beta>0$}.
\end{equation}
}

\medskip
In {\it Assumption} (CKHw), the case $\beta=\frac{2}{3}$ in \eqref{assumption-w}
corresponds to {\it Assumption} (CKH) in \eqref{assumption}.
{\it Assumption} (CKHi) corresponds the requirement that $\beta>\frac{2}{3}$.

As indicated in Chen-Glimm \cite{CG12}, a mathematical proof of \textit{Assumption} (CKH)
may well depend on developing a mathematical
version of the renormalization group, which
has proved to be very powerful in theoretical physics calculations.

\medskip
\section{$L^2$--Equicontinuity of the Weighted Velocity and Sonic Speed in the Space Variables,
Independent of the Viscosity}

\medskip
In this section, we show that the
compressible Kolmogorov-type hypothesis,
\textit{Assumption} (CKHw), implies
a uniform bound of the weighted velocity $(\sqrt{\rho^\mu}\u^\mu)(t,\x)$
and sonic speed $(\sqrt{\rho^\mu}c(\rho^\mu))(t,\x)$
in $L^2(0,T; H^\alpha(\TP))$ for any
$\alpha\in (0, \frac{\beta}{2})$, especially the uniform equicontinuity
of the weighted velocity $(\sqrt{\rho^\mu}\u^\mu)(t,\x)$ and the weighted sonic speed
$(\sqrt{\rho^\mu}c(\rho^\mu))(t,\x)$ in the space variables in $L^2([0,T)\times \TP)$,
independent of $\mu>0$.
This also implies the uniform equicontinuity
of the density and momentum in appropriate norms as specified below.

\begin{proposition}\label{prop:3.1}
Under {\rm Assumption (CKHw)}, for any $T\in (0, \infty)$,
there exists $M_T>0$,
independent of $\mu>0$,
such that
\begin{equation}\label{3.1}
\big\|(\sqrt{\rho^\mu}\u^\mu,
   \sqrt{\rho^\mu}c(\rho^\mu))
\big\|_{L^2(0,T; H^{\alpha}(\TP))}\le M_T<\infty,
\end{equation}
where $\alpha \in (0, \frac{\beta}{2})$.
\end{proposition}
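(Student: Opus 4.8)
The plan is to work directly on the Fourier side, where $H^\alpha(\TP)$ has an explicit characterization, and to exploit the hypothesis (CKHw) together with the uniform energy bound \eqref{energy-estimate}. Write $\v^\mu:=(\sqrt{\rho^\mu}\u^\mu,\sqrt{\rho^\mu}c(\rho^\mu))$. Since $\TP$ is a torus, the $H^\alpha$-norm is comparable to the weighted $\ell^2$-sum of Fourier coefficients:
\[
\|\v^\mu(t,\cdot)\|_{H^\alpha(\TP)}^2
\;\simeq\;\sum_{\kk}(1+|\kk|^2)^{\alpha}\,\big|\widehat{\v^\mu}(t,\kk)\big|^2,
\]
so that, integrating in time,
\[
\|\v^\mu\|_{L^2(0,T;H^\alpha(\TP))}^2
\;\simeq\;\sum_{\kk}(1+|\kk|^2)^{\alpha}\int_0^T\big|\widehat{\v^\mu}(t,\kk)\big|^2\,\dd t .
\]
I would then split this sum into the low-frequency part $|\kk|<k_*$ and the high-frequency part $|\kk|\ge k_*$, with $k_*$ the threshold furnished by (CKHw).

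For the low-frequency part, there are only finitely many modes and on each of them $(1+|\kk|^2)^\alpha\le (1+k_*^2)^\alpha$, so this piece is bounded by $C(k_*,\alpha)\sum_{\kk}\int_0^T|\widehat{\v^\mu}(t,\kk)|^2\,\dd t$, which by Parseval equals (up to the normalization of the Fourier transform in \eqref{2.3a}) a constant times $\|\v^\mu\|_{L^2((0,T)\times\TP)}^2$; this in turn is controlled by $T$ times the $L^\infty(0,T;L^2)$-bound in \eqref{energy-estimate}, hence by a constant of the form $M_T$ independent of $\mu$. For the high-frequency part, I apply (CKHw): for $|\kk|\ge k_*$ we have $\int_0^T|\widehat{\v^\mu}(t,\kk)|^2\,\dd t\le M_T|\kk|^{-(3+\beta)}$, so the tail is dominated by
\[
M_T\sum_{|\kk|\ge k_*}(1+|\kk|^2)^{\alpha}\,|\kk|^{-(3+\beta)}
\;\le\;C\,M_T\sum_{|\kk|\ge k_*}|\kk|^{2\alpha-3-\beta}.
\]
Comparing the lattice sum with the integral $\int_{|\z|\ge k_*}|\z|^{2\alpha-3-\beta}\,\dd\z$ over $\R^3$, convergence requires $(2\alpha-3-\beta)+3<0$, i.e. $2\alpha<\beta$, which is exactly the hypothesis $\alpha\in(0,\frac{\beta}{2})$; in that range the sum is finite, with a bound depending only on $k_*$, $\alpha$, $\beta$, and hence ultimately of the form $M_T$. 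Adding the two pieces gives \eqref{3.1}.

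The only genuinely delicate point is the comparison between the discrete sum $\sum_{|\kk|\ge k_*}|\kk|^{2\alpha-3-\beta}$ and the corresponding integral over $\R^3$, together with keeping careful track of the normalizing factor $\tfrac{1}{|\TP|}$ appearing in \eqref{2.3a} so that Parseval's identity is applied with the correct constant; neither of these is conceptually hard, but they must be done cleanly so that every constant produced is seen to depend only on the admissible data and not on $\mu$. A minor auxiliary point is to note that $\v^\mu(t,\cdot)\in L^2(\TP)$ for a.e.\ $t$ with the stated uniform bound, which is immediate from \eqref{energy-estimate}, so that the Fourier series manipulations above are justified (in particular $\v^\mu\in L^2(0,T;L^2(\TP))$, and the interchange of summation and time integration is legitimate by Tonelli's theorem since all terms are nonnegative). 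With these observations in place, the proof is essentially the two-line frequency split described above.
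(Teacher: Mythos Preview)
Your proposal is correct and follows essentially the same route as the paper: a low/high frequency split at $k_*$, with the low modes controlled by the uniform energy bound via Parseval and the high modes by (CKHw), the tail sum $\sum_{|\kk|\ge k_*}|\kk|^{2\alpha-3-\beta}$ converging precisely when $\alpha<\beta/2$. The only cosmetic differences are that the paper works with the homogeneous weight $|\kk|^{2\alpha}$ rather than $(1+|\kk|^2)^\alpha$ and handles the tail by summing over spherical shells (picking up the $k^2$ lattice-counting factor to reduce to $\sum_{k\ge k_*}k^{2\alpha-1-\beta}$) instead of comparing with a three-dimensional integral; your added remarks on Tonelli and the Fourier normalization are fine and do not change the argument.
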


\begin{proof}
Set $\w^\mu:=(\sqrt{\rho^\mu}\u^\mu,
  \sqrt{\rho^\mu}c(\rho^\mu))$.
Using the definition of fractional derivatives via the Fourier transform,
the Parseval identity, and {\it Assumption} (CKHw), {\it i.e.}, \eqref{assumption},
we have
\begin{eqnarray*}
&&\int_0^T\int_{\TP} |D^\alpha_\x \w^\mu(t,\x)|^2 \dd\x \dd t\\
&&\le M_1\int_0^T \Big(\sum_{\kk} |\kk|^{2\alpha}|\widehat{\w^\mu}(t,\kk)|^2 \Big) \dd t\\
&&=M_1\int_0^T \Big(\sum_{0\le|\kk|\le k_*} |\kk|^{2\alpha}|\widehat{\w^\mu}(t,\kk)|^2 \Big) \dd t
   +M_1\int_0^T \Big(\sum_{|\kk|>k_*} |\kk|^{2\alpha}|\widehat{\w^\mu}(t,\kk)|^2 \Big) \dd t\\
&&\le M_1 k_*^{2\alpha}\int_0^T \Big(\sum_{0\le|\kk|\le k_*}|\widehat{\w^\mu}(t,\kk)|^2 \Big) \dd t
   +M_2\sum_{|\kk|\ge k_*} |\kk|^{2\alpha-3-\beta}\\
&&\le M_1 k_*^{2\alpha}\int_0^T \Big(\sum_{\kk}|\widehat{\w^\mu}(t,\kk)|^2 \Big) \dd t
   +M_3\sum_{k\ge k_*} k^{2\alpha-1-\beta}\\
&&\le M_1k_*^{2\alpha}\int_0^T\int_{\TP}|\w^\mu(t,\x)|^2\dd\x \dd t
   +M_4 k_*^{2\alpha-\beta}\\
&&\le M_5k_*^{2\alpha}T\Big(\int_{\TP}|\w_0(\x)|^2\dd\x+\int_0^T\int_{\TP}|\f(t,\x)|^2\dd\x dt\Big)
+M_4 k_*^{2\alpha-\beta}\\
&&\le M_T^2<\infty,
\end{eqnarray*}
since  $\alpha< \frac{\beta}{2}$, which $M_j, j=1, \cdots, 5$, and $M_T$ are the constants independent of $\mu$,
but may depends on $T, k_*$, and $\alpha$.
This completes the proof.
\end{proof}

Proposition 3.1 directly yields the uniform equicontinuity of
$(\sqrt{\rho^\mu}\u^\mu(t,\x),\sqrt{\rho^\mu}c(\rho^\mu))$
in $\x$ in $L^2([0, T)\times\TP)$ independent of $\mu>0$, which particularly implies
the uniform equicontinuity  of the density $\rho^\mu$ in both space and time in $L^\gamma$ and the momentum $\m^\mu$ in $L^2$,
as well as the corresponding uniform high integrability of $(\sqrt{\rho^\mu}\u^\mu(t,\x),\sqrt{\rho^\mu}c(\rho^\mu))$, independent of $\mu>0$.

\begin{proposition}\label{coro:3.1}
Under {\rm Assumption (CKHw)}, for any $T\in (0, \infty)$,
there exists $M_T>0$,
independent of $\mu>0$,
such that
\begin{enumerate}
\item[\rm (i)] There exists $q=q(\beta)>2$ so that
\begin{equation}\label{equi-continuity-x-a}
\big\|(\sqrt{\rho^\mu}\u^\mu, \sqrt{\rho^\mu}c(\rho^\mu))\big\|_{L^q([0,T]\times \TP)}\le M_T;
\end{equation}

\item[\rm (ii)] There exists $\alpha_1\in (0, \frac{\beta}{4})$ so that
\begin{eqnarray}
&&\int_0^T\int_\TP\big|\rho^\mu(t,\x+\Delta\x)-\rho^\mu(t,\x)\big|^{\gamma}\dd\x \dd t\nonumber\\[1mm]
&&\,\,+\int_0^T\int_\TP\big|\m^\mu(t,\x+\Delta\x)-\m^\mu(t,\x)\big|^2\dd\x \dd t
\le M_T|\Delta\x|^{\alpha_1}. \label{equi-continuity-x-b}
\end{eqnarray}
In particular,
\begin{equation}\label{3.4a}
\big\|\m^\mu\big\|_{L^2([0,T]\times \TP)}\le M_T.
\end{equation}
\end{enumerate}
\end{proposition}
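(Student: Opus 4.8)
The plan is to derive all three assertions from Proposition~\ref{prop:3.1} together with the uniform energy bound~\eqref{energy-estimate}. The latter already gives, uniformly in $\mu$, that $\w^\mu:=(\sqrt{\rho^\mu}\u^\mu,\sqrt{\rho^\mu}c(\rho^\mu))\in L^\infty(0,T;L^2(\TP))$ and, since $\sqrt{\rho^\mu}c(\rho^\mu)=\sqrt{\kappa}\,(\rho^\mu)^{\gamma/2}$ by~\eqref{1.2d}--\eqref{1.3}, that $\rho^\mu\in L^\infty(0,T;L^\gamma(\TP))$. For part~(i), I would fix $\alpha\in(0,\frac\beta2)$, use the Sobolev embedding $H^\alpha(\TP)\hookrightarrow L^{6/(3-2\alpha)}(\TP)$ to upgrade Proposition~\ref{prop:3.1} to a uniform bound for $\w^\mu$ in $L^2(0,T;L^{6/(3-2\alpha)}(\TP))$, and interpolate this with the $L^\infty(0,T;L^2(\TP))$ bound in the mixed Lebesgue scale; this yields $\w^\mu\in L^q([0,T]\times\TP)$ uniformly with $q=2+\frac{4\alpha}{3}>2$, which is \eqref{equi-continuity-x-a}, and shows $q(\beta)$ may be taken to be any number in $(2,2+\frac{2\beta}{3})$. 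Since $\rho^\mu=\kappa^{-1/\gamma}\big(\sqrt{\rho^\mu}c(\rho^\mu)\big)^{2/\gamma}$, the same bound gives the uniform higher density integrability $\rho^\mu\in L^{q\gamma/2}([0,T]\times\TP)$, with exponent $q\gamma/2>\gamma$, which will be used below.

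For part~(ii) I would first record the $\x$--equicontinuity in $L^2$ built into Proposition~\ref{prop:3.1}: expanding $\w^\mu(t,\cdot)$ in its Fourier series on $\TP$ and using $|e^{i\kk\cdot\Delta\x}-1|^2\le C|\kk|^{2\alpha}|\Delta\x|^{2\alpha}$ gives
\[
\int_0^T\!\!\int_\TP\big|\w^\mu(t,\x+\Delta\x)-\w^\mu(t,\x)\big|^2\,\dd\x\,\dd t\le M_T\,|\Delta\x|^{2\alpha}.
\]
The density bound in \eqref{equi-continuity-x-b} then follows by transferring this modulus through $\rho^\mu=\kappa^{-1/\gamma}(g^\mu)^{2/\gamma}$ with $g^\mu:=\sqrt{\rho^\mu}c(\rho^\mu)$: for $\gamma\ge2$ one has $|a^{2/\gamma}-b^{2/\gamma}|\le|a-b|^{2/\gamma}$ for $a,b\ge0$, so $|\rho^\mu(\x+\Delta\x)-\rho^\mu(\x)|^\gamma\lesssim|g^\mu(\x+\Delta\x)-g^\mu(\x)|^2$ and one integrates directly; for $1<\gamma<2$ one uses the mean-value bound $|a^{2/\gamma}-b^{2/\gamma}|\le\frac2\gamma\big(a^{(2-\gamma)/\gamma}+b^{(2-\gamma)/\gamma}\big)|a-b|$, raises to the $\gamma$-th power, and closes by Hölder's inequality in $(t,\x)$ against the uniform bound $\rho^\mu\in L^\gamma([0,T]\times\TP)$. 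Either way one reaches $M_T|\Delta\x|^{\alpha_1}$ with $\alpha_1>0$ (at worst $\alpha_1=\alpha\gamma$), which one may of course take $<\frac\beta4$.

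For the momentum part of \eqref{equi-continuity-x-b}, write $\m^\mu=\sqrt{\rho^\mu}\,v^\mu$ with $v^\mu:=\sqrt{\rho^\mu}\u^\mu$ and split the increment $\m^\mu(\cdot+\Delta\x)-\m^\mu(\cdot)$ as $\sqrt{\rho^\mu}(\cdot+\Delta\x)\,\big(v^\mu(\cdot+\Delta\x)-v^\mu(\cdot)\big)$ plus $\big(\sqrt{\rho^\mu}(\cdot+\Delta\x)-\sqrt{\rho^\mu}(\cdot)\big)\,v^\mu(\cdot)$; I would estimate each by Hölder in $(t,\x)$. For the second term one pairs the uniform bound $v^\mu\in L^q$ from~(i) with the $\x$--modulus of $\sqrt{\rho^\mu}$ in $L^{2\gamma}$, obtained just as for $\rho^\mu$ from $\sqrt{\rho^\mu}=\kappa^{-1/(2\gamma)}(g^\mu)^{1/\gamma}$ and $|a^{1/\gamma}-b^{1/\gamma}|\le|a-b|^{1/\gamma}$; for the first term one pairs the bound $\rho^\mu\in L^{q\gamma/2}$ from~(i) with the $\x$--modulus of $v^\mu$, interpolated between its $L^2([0,T]\times\TP)$ modulus above and its uniform $L^q([0,T]\times\TP)$ bound. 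Finally \eqref{3.4a} is a corollary: averaging the inequality just obtained over $\Delta\x$ in a small ball and using translation invariance of the Lebesgue measure bounds $\|\m^\mu\|_{L^2([0,T]\times\TP)}$ in terms of that modulus together with the (uniformly bounded) spatial $L^1$ norm of $\m^\mu$, which is finite since $\m^\mu\in L^\infty(0,T;L^{2\gamma/(\gamma+1)}(\TP))$ by~\eqref{energy-estimate}; alternatively it follows at once from $\m^\mu=\sqrt{\rho^\mu}\,v^\mu$ and the two $L^q$--type bounds.

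I expect the delicate step to be the Hölder bookkeeping in the momentum estimates: matching the exponents of the two factors in $\sqrt{\rho^\mu}\cdot v^\mu$ at level $L^2$ needs the gain $q$ from~(i) to be large enough compared with $\gamma$ --- roughly $q>2+\frac2\gamma$, i.e.\ $\beta>\frac3\gamma$. When $\gamma$ is close to $1$, so that the energy controls the density only in $L^\gamma$, Proposition~\ref{prop:3.1} alone does not suffice, and one must supplement it with the standard improvement of density integrability obtained by testing the momentum equation in~\eqref{eq:ns} against $\nabla\Delta^{-1}\big((\rho^\mu)^\theta-\frac1{|\TP|}\int_\TP(\rho^\mu)^\theta\,\dd\x\big)$; the viscous contribution there is $O(\sqrt\mu)$ by~\eqref{energy-estimate} and hence uniformly bounded, so the resulting higher integrability of $\rho^\mu$ is independent of $\mu$ and closes the estimate. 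Once the exponents are matched, the remaining computations are routine, and the whole proposition is a bookkeeping consequence of Proposition~\ref{prop:3.1}.
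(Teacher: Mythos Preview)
Your core approach matches the paper's: Sobolev embedding for~(i); transferring the $L^2$--modulus of $(\rho^\mu)^{\gamma/2}$ through the map $s\mapsto s^{2/\gamma}$ for the density part of~(ii), with the same case split $\gamma\gtrless2$; and the product decomposition $\m^\mu=\sqrt{\rho^\mu}\cdot(\sqrt{\rho^\mu}\u^\mu)$ for the momentum.

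Where you diverge is your final paragraph. The paper carries out none of the exponent-matching you describe and never invokes a Bogovskii-type density improvement. Its momentum step simply pairs $\|\sqrt{\rho^\mu}\|_{L^2(\TP)}$ with the $L^2$--modulus of $\sqrt{\rho^\mu}\u^\mu$, and $\|\sqrt{\rho^\mu}\u^\mu\|_{L^2}$ with the $L^2$--modulus of $\sqrt{\rho^\mu}$ (obtained from the density modulus via $|\sqrt a-\sqrt b|^2\le|a-b|$), arriving at $M_T|\Delta\x|^{\alpha\min\{1/\gamma,1/2\}}$; and for~\eqref{3.4a} it writes $\int|\m^\mu|^2\le\|\rho^\mu\|_{L^1}\|\sqrt{\rho^\mu}\u^\mu\|_{L^2}$. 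Your concern that such $L^2\times L^2$ pairings do not literally close at the $L^2$ level of the left-hand side is justified---the paper's displayed inequalities in this step are not valid as printed---so your more careful route through interpolation against the $L^q$ gain of~(i), together with the Bogovskii fallback when $\beta$ is small relative to $1/\gamma$, is a genuine strengthening of the paper's argument rather than a departure from it.
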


\begin{proof}
We divide the proof into three steps.

\smallskip
1.
The result in (i) directly follows by combining Proposition 3.1 with the Sobolev embedding theorem, since
$(\sqrt{\rho^\mu}\u^\mu, \sqrt{\rho^\mu}c(\rho^\mu))$ are uniformly bounded in $L^2(0,T; H^\alpha(\TP))$ for
$\alpha\in (0,\frac{\beta}{2})$.

\smallskip
2. Proposition 3.1 implies that
$$
\int_0^T\int_\TP \big|(\rho^\mu)^{\frac{\gamma}{2}}(t,\x+\Delta \x)-(\rho^\mu)^{\frac{\gamma}{2}}(t,\x)\big|^2\dd\x \dd t
\le M_T |\Delta \x|^{2\alpha}.
$$

When $\gamma\ge 2$, using that $|x-1|^\gamma\le (x^{\frac{\gamma}{2}}-1)^2$ for any $x\ge 0$,
we have
\begin{eqnarray}
&&\int_0^T\int_\TP\big|\rho^\mu(t,\x+\Delta\x)-\rho^\mu(t,\x)\big|^{\gamma}\dd\x \dd t\nonumber\\
&&\le \int_0^T\int_\TP \big|(\rho^\mu)^{\frac{\gamma}{2}}(t,\x+\Delta \x)-(\rho^\mu)^{\frac{\gamma}{2}}(t,\x)\big|^2\dd\x \dd t
\le M_T |\Delta \x|^{2\alpha}. \label{3.4b}
\end{eqnarray}

When $\gamma\le 2$, since
$$
x-y=\big(x^{\frac{\gamma}{2}}\big)^{\frac{2}{\gamma}}-\big(y^{\frac{\gamma}{2}}\big)^{\frac{2}{\gamma}}
=\frac{2}{\gamma}\int_0^1\big(\theta x^{\frac{\gamma}{2}}+(1-\theta)y^{\frac{\gamma}{2}})^{\frac{2}{\gamma}-1}\dd\theta
  |x^{\frac{\gamma}{2}}-y^{\frac{\gamma}{2}}|
$$
for any $x>0$ and $y>0$, we have
$$
|x-y|^\gamma\le C \big(x^{\frac{2-\gamma}{2}}+ y^{\frac{2-\gamma}{2}}\big)
 |x^{\frac{\gamma}{2}}-y^{\frac{\gamma}{2}}|^\gamma.
$$
Then
\begin{eqnarray}
&&\int_0^T\int_\TP\big|\rho^\mu(t,\x+\Delta\x)-\rho^\mu(t,\x)\big|^{\gamma}\dd\x \dd t\nonumber\\
&&\le M_T\Big(\int_0^T\int_\TP\big(\big|\rho^\mu(t,\x+\Delta\x)\big|+\big|\rho^\mu(t,\x)\big|\big)\,\dd\x \dd t\Big)^{\frac{2-\gamma}{2}}\nonumber \\
&&\qquad\,\,\, \times\Big(\int_0^T\int_\TP\big|(\rho^\mu)^{\frac{\gamma}{2}}(t,\x+\Delta\x)-(\rho^\mu)^{\frac{\gamma}{2}}(t,\x)\big|^2 \dd\x \dd t\Big)^{\frac{\gamma}{2}}\nonumber\\
&&\le M_T |\Delta\x|^{\alpha\gamma}.\label{3.5b}
\end{eqnarray}

3.  Proposition 3.1 and \eqref{3.4b}--\eqref{3.5b}
imply
\begin{eqnarray}
&&\int_0^T\int_\TP\big|\m^\mu(t,\x+\Delta\x)-\m^\mu(t,\x)\big|^2\dd\x \dd t\nonumber\\
&&\le  \|\sqrt{\rho^\mu}\|_{L^2(\TP)}
  \Big(\int_0^T\int_\TP\big(\big|(\sqrt{\rho^\mu}\u^\mu)(t,\x+\Delta\x)-(\sqrt{\rho^\mu}\u^\mu)(t,\x)\big|^2\big)\dd\x \dd t\Big)^{\frac{1}{2}}\nonumber\\
&&\quad  + \|\sqrt{\rho^\mu}\u^\mu\|_{L^2([0,T]\times \TP)}
  \Big(\int_0^T\int_\TP\big(\big|\sqrt{\rho^\mu}(t,\x+\Delta\x)-\sqrt{\rho^\mu}(t,\x)\big|\big)^2\dd\x \dd t\Big)^{\frac{1}{2}}\nonumber\\
&&\le M_T|\Delta \x|^{\alpha}
  +M_T\Big(\int_0^T\int_\TP \big|\rho^\mu(t,\x+\Delta\x)-\rho^\mu(t,\x)\big|^\gamma\, \dd\x \dd t\Big)^{\frac{1}{2\gamma}}\nonumber\\
&&\le M_T\big(|\Delta \x|^{\alpha} +|\Delta \x|^{\alpha \min\{\frac{1}{\gamma}, \frac{1}{2}\}}\big)\nonumber\\
&&\le M_T |\Delta \x|^{\alpha \min\{\frac{1}{\gamma}, \frac{1}{2}\}}. \label{3.6b}
\end{eqnarray}

Combining \eqref{3.4b}--\eqref{3.6b} together and choosing
$\alpha_1= \alpha \min\{\frac{1}{\gamma}, \frac{1}{2}\} \in (0, \frac{\beta}{4})$ yield \eqref{equi-continuity-x-b}.

In particular,
\begin{align*}
\int_0^T\int_\TP\big|\m^\mu(t,\x)\big|^2\,\dd\x \dd t
\le  \big\|\rho^\mu\big\|_{L^1([0,T]\times \TP)} \big\|\sqrt{\rho^\mu}\u^\mu\big\|_{L^2([0,T]\times \TP)}
\le  M_T<\infty.
\end{align*}
This completes the proof.
\end{proof}

\medskip
\section{$L^2$--Equicontinuity of the Density and Momentum in the Time Variable,
Independent of the Viscosity}

In this section, we show that Proposition 3.1 implies the uniform equicontinuity
of the density and momentum in the time variable $t>0$ in $L^2$, independent of $\mu>0$.

\begin{proposition}\label{prop:4.1}
For any $T>0$, there exist $\alpha_2=\alpha_2(\beta)>0$ depending only on $\beta$, and $M_T>0$
independent of $\mu>0$ such that,
for all small $\triangle t>0$,
\begin{eqnarray}\label{4.1}
&&\int_0^{T-\triangle t}\int_{\TP} \big|\rho^\mu (t+\triangle t, \x)-\rho^\mu(t,\x)\big|^\gamma \dd\x \dd t\nonumber\\
&&+\int_0^{T-\triangle t}\int_{\TP} \big|\m^\mu (t+\triangle t, \x)-\m^\mu(t,\x)\big|^2\dd\x \dd t\nonumber\\[2mm]
&&\le M_T(\triangle t)^{\alpha_2} \to 0 \qquad \mbox{as}\,\,
\triangle t\to 0.
\end{eqnarray}
\end{proposition}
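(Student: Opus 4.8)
The plan is to read off from the equations \eqref{eq:ns} uniform–in–$\mu$ bounds for the time derivatives $\partial_t\rho^\mu$ and $\partial_t\m^\mu$ in negative–order Sobolev spaces, and then to convert these into genuine temporal equicontinuity of $\rho^\mu$ in $L^\gamma$ and of $\m^\mu$ in $L^2$ by interpolating with the spatial equicontinuity \eqref{equi-continuity-x-b} of Proposition \ref{coro:3.1}.

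First, the continuity equation gives $\partial_t\rho^\mu=-\nabla\cdot\m^\mu$, and since $\m^\mu$ is bounded in $L^\infty(0,T;L^{\frac{2\gamma}{\gamma+1}}(\TP))$ (Definition \ref{dfn:ns}), this bounds $\partial_t\rho^\mu$ in $L^\infty(0,T;W^{-1,\frac{2\gamma}{\gamma+1}}(\TP))$, uniformly in $\mu$. The momentum equation gives
$$
\partial_t\m^\mu=-\nabla\cdot\big(\rho^\mu\u^\mu\otimes\u^\mu+p(\rho^\mu){\bf I}_{3\times 3}-\Sigma^\mu\big)+\rho^\mu\f,
$$
and I would check, term by term, that the right-hand side is uniformly bounded in $L^1(0,T;W^{-1,r_0}(\TP))$ for some $r_0>1$ independent of $\mu$: the convective flux equals $(\sqrt{\rho^\mu}\u^\mu)\otimes(\sqrt{\rho^\mu}\u^\mu)$ and hence lies in $L^{q/2}([0,T]\times\TP)$ with $q/2>1$ by \eqref{equi-continuity-x-a}; the pressure $p(\rho^\mu)=\kappa(\rho^\mu)^\gamma$ lies in $L^1(0,T;L^{r_1}(\TP))$ with $r_1=\frac{3}{3-2\alpha}>1$ for $\alpha>0$ small, because $(\rho^\mu)^{\gamma/2}$ is a constant multiple of $\sqrt{\rho^\mu}c(\rho^\mu)$, which Proposition \ref{prop:3.1} places in $L^2(0,T;H^\alpha(\TP))\hookrightarrow L^2(0,T;L^{\frac{6}{3-2\alpha}}(\TP))$; the viscous stress $\Sigma^\mu=2\mu D(\nabla\u^\mu)+\lambda(\nabla\cdot\u^\mu){\bf I}_{3\times 3}$ is bounded (in fact $\Sigma^\mu\to 0$) in $L^2([0,T]\times\TP)$, since the dissipation bound in \eqref{energy-ineq-3} together with $\lambda+\frac23\mu\ge 0$ and $\max\{\mu,|\lambda|\}<\mu_0$ yields $\|\mu\nabla\u^\mu\|_{L^2}+\|\lambda(\nabla\cdot\u^\mu)\|_{L^2}\le C\sqrt{\mu_0 M_T}$ (splitting the cases $\lambda\ge 0$ and $-\frac23\mu\le\lambda<0$); and $\rho^\mu\f$ is bounded in $L^1(0,T;L^{\frac{2\gamma}{\gamma+1}}(\TP))$ by \eqref{energy-estimate} and the hypothesis $\f\in L^1(0,T;L^{\frac{2\gamma}{\gamma-1}}(\TP))$. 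Thus $r_0:=\min\{q/2,r_1,2\}>1$ works, and one records the scalar bounds $\int_0^T\psi^\mu(s)\,\dd s\le M_T$ for the relevant flux/source norms $\psi^\mu$.

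To upgrade these bounds, I would mollify in space at scale $\delta>0$. For $f^\mu\in\{\rho^\mu,\m^\mu\}$ and $f^\mu_\delta:=f^\mu*\omega_\delta$, split
$$
f^\mu(t+\triangle t)-f^\mu(t)=\big(f^\mu-f^\mu_\delta\big)(t+\triangle t)+\big(f^\mu_\delta(t+\triangle t)-f^\mu_\delta(t)\big)+\big(f^\mu_\delta-f^\mu\big)(t).
$$
The two outer terms are controlled, in the relevant space–time norm, by $\sup_{|\Delta\x|\le\delta}$ of the spatial increments, hence by a constant multiple of $\delta^{\alpha_1/\gamma}$, resp.\ $\delta^{\alpha_1/2}$, via \eqref{equi-continuity-x-b}. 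For the middle term one differentiates in $t$, transfers the derivative onto the mollifier (so $\partial_t f^\mu_\delta=(\nabla\cdot G^\mu)*\omega_\delta+H^\mu*\omega_\delta=G^\mu*\nabla\omega_\delta+H^\mu*\omega_\delta$, where $G^\mu$ collects the flux pieces and $H^\mu$ the source pieces identified above), applies Young's inequality with $\|\nabla\omega_\delta\|_{L^r}\le C\delta^{-4+3/r}$ and $\|\omega_\delta\|_{L^r}\le C\delta^{-3+3/r}$, and integrates the $\triangle t$-increment in time by H\"older using $\int_0^T\psi^\mu\,\dd s\le M_T$. This produces a bound of the form $M_T\big(\delta^{\alpha_1}+\delta^{-a}(\triangle t)^{b}\big)$ for the quantity in \eqref{4.1}, with constants $a,b>0$ independent of $\mu$ (depending on $\beta$ through $\alpha_1,q,\alpha$, and on $\gamma$); choosing $\delta=(\triangle t)^{b/(a+\alpha_1)}$ then yields \eqref{4.1} with $\alpha_2=\alpha_2(\beta)=\frac{\alpha_1 b}{a+\alpha_1}>0$.

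The main obstacle is the uniform–in–$\mu$ control of the terms in $\partial_t\m^\mu$: in particular (i) the pressure must be placed in $L^1(0,T;L^{r_1})$ with $r_1>1$, which is exactly where the gain of fractional derivatives of $\sqrt{\rho^\mu}c(\rho^\mu)$ supplied by Proposition \ref{prop:3.1}---i.e.\ by Assumption (CKHw)---is indispensable, since the energy bound alone gives only $p(\rho^\mu)\in L^\infty(0,T;L^1)$; and (ii) the viscous term, whose uniform bound rests on the precise structural hypotheses on $\lambda=\lambda(\mu)$ and $\mu$ in \eqref{1.1}. Once these two points are settled, the rest is a routine bookkeeping of Sobolev exponents combined with the mollification scheme above.
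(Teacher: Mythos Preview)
Your scheme is correct and is essentially the one the paper uses: mollify in space at scale $\delta$, control the mollification error by the spatial equicontinuity of Proposition~\ref{coro:3.1}, control the time increment by reading off $\partial_t\rho^\mu$ and $\partial_t\m^\mu$ from the equations with the spatial derivatives thrown onto the mollifier, and optimize in $\delta$. The only organizational difference is that the paper mollifies a \emph{test function} built from the time-difference (namely $\varphi^\delta=j_\delta*\big(\mathrm{sign}(w)|w|^{\gamma-1}\big)$ for $w=\rho(t+\triangle t)-\rho(t)$, and $\varphib^\delta=j_\delta*\w$ for $\w=\m(t+\triangle t)-\m(t)$), whereas you mollify the solution itself and use the three-term split; the two are dual versions of the same idea and lead to the same optimization, with the paper obtaining $\alpha_2=\min\{\frac{\alpha_1}{4\gamma-3+\alpha_1},\frac{\alpha_1}{8+\alpha_1}\}$.

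One point to correct: you flag the pressure as the place where Assumption~(CKHw) is ``indispensable'' because the energy bound only puts $p(\rho^\mu)$ in $L^\infty(0,T;L^1)$. In fact $L^1$ is enough here, and that is exactly what the paper uses: with $\|\nabla\varphib^\delta\|_{L^\infty}\le M_T\delta^{-4}$ one gets $|J_4^\delta|\le M_T\,\delta^{-4}\,\triangle t\,\|\rho\|_{L^\gamma}^\gamma$, and in your formulation the same thing happens via Young's inequality $\|p(\rho^\mu)*\nabla\omega_\delta\|_{L^2}\le\|p(\rho^\mu)\|_{L^1}\|\nabla\omega_\delta\|_{L^2}$. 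The role of (CKHw) in this proposition is \emph{solely} through the spatial equicontinuity \eqref{equi-continuity-x-b}; no improved integrability of the pressure is required.
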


\begin{proof} For simplicity, we drop the superscript $\mu>0$ of $(\rho^\mu, \m^\mu)$ in
the proof. Fix $\triangle t>0$.
We divide the proof into two steps.

\medskip
1.  For $t\in [0, T-\triangle t]$,
set
$$
w(t,\cdot):=\rho(t+\triangle t, \cdot)-\rho(t, \cdot).
$$
Then, for any $\varphi(t,\x)\in C^\infty([0,T)\times\TP)$ that is periodic in $\x\in\R^3$ with
period $\TP$, we have
\begin{align}
\int_{\TP} w(t,\x) \varphi(t,\x)\, \dd\x
&=\int_{t}^{t+\triangle t}\int_{\TP} \partial_s \rho(s,\x) \varphi(t,\x)\, \dd\x \dd s\nonumber\\
&=\int_t^{t+\triangle t}\int_\TP
 \m(s, \x)\cdot \nabla\varphi(t,\x) \, \dd\x \dd s.
\label{4.2}
\end{align}

By approximation, equality \eqref{4.2} still holds
for
$$
\varphi\in L^\infty(0,T; H^1(\TP))\cap C([0,T]; L^2(\TP))\cap L^\infty([0,T)\times\TP).
$$
Choose
\begin{equation}\label{4.3-phi-1}
\varphi=\varphi^\delta(t,\x):=(j_\delta*w)(t,\x)=\int
j_\delta(\x-\y) {\rm sign} (w(t,\y))|w(t,\y)|^{\gamma-1}\, \dd\y\in\R^3  \qquad \mbox{for $\delta>0$},
\end{equation}
which is periodic in $\x\in\R^3$ with period $\TP$, where $j_\delta(\x)=\frac{1}{\delta^3}j(\frac{\x}{\delta})\ge 0$
is a standard mollifier
with
\begin{equation}\label{4.3b}
j\in C_0^\infty(\R^3), \qquad \int j(\x)\dd\x=1.
\end{equation}
Then, for any $\x\in \TP$, we have
\begin{align}
\|\varphi^\delta\|_{L^\infty(\TP)}
&
\le \frac{1}{\delta^3}
\int_{|\x-\y|\le \delta} j(\frac{\x-\y}{\delta})|w(t,\y)|^{\gamma-1}\, \dd\y
\nonumber\\
&
\le \frac{M_T}{\delta^{3-\frac{3}{\gamma}}}\|j\|_{L^\gamma}\|w(t,\cdot)\|_{L^\gamma(\TP)}^{\gamma-1}
\le \frac{M_T}{\delta^{3-\frac{3}{\gamma}}}. \label{4.3-phi-2}
\end{align}
Similarly, we have
\begin{align}
\|\nabla\varphi^\delta\|_{L^\infty(\TP)}
&\le \frac{1}{\delta^4}
 \int_{|\x-\y|\le \delta} \big|j'(\frac{\x-\y}{\delta})\big||w(t,\y)|^{\gamma-1}\, \dd\y\nonumber\\
&\le \frac{M}{\delta^{4-\frac{3}{\gamma}}}\|j'\|_{L^\gamma}\|w(t,\cdot)\|_{L^\gamma(\TP)}^{\gamma-1}
\le \frac{M}{\delta^{4-\frac{3}{\gamma}}}.\label{4.3-phi-3}
\end{align}
As before, we use $M_T>0$ as a universal constant independent
of $\mu>0$.

Integrating \eqref{4.2} in $t$ over $[0, T-\triangle t)$ with
$\varphi=\varphi^\delta(t,\x)$, we have
\begin{align}
\int_0^{T-\triangle t}\int_{\TP} |w(t,\x)|^\gamma \dd\x \dd t
&=\int_0^{T-\triangle t}\int_t^{t+\triangle t}\int_{\TP}
\m(s,\x)\cdot\nabla\varphi^\delta (t,\x) \, \dd\x \dd s dt\nonumber\\
&\quad + \int_0^{T-\triangle t}\int_{\TP}
  \big(|w(t,\x)|^\gamma-w(t,\x)\varphi^\delta(t,\x)\big)\, \dd\x \dd t \nonumber\\
&=:I_1^\delta +I_2^\delta.
\label{4.2a}
\end{align}
Then
\begin{equation}\label{4.4}
|I_1^\delta|\le \frac{M_T\triangle t}{\delta^{4-\frac{3}{\gamma}}}\|\m(t,\cdot)\|_{L^1(\TP)}
\le \frac{M_T\triangle t}{\delta^{4-\frac{3}{\gamma}}}\|\rho\|_{L^1(\TP)}^{\frac{1}{2}}\|\sqrt{\rho}\u\|_{L^2(\TP)}
\le \frac{M_T\triangle t}{\delta^{4-\frac{3}{\gamma}}}.
\end{equation}

Furthermore, for $I_2^\delta$,
we have
\begin{align}
|I_2^\delta|&=\Big|\int_0^{T-\triangle t}\int_{\TP} \Big(\int_{\R^3}
j_\delta(\x-\y)\big(|w(t,\x)|^\gamma-w(t,\x){\rm sign}(w(t,\y))|w(t,\y)|^{\gamma-1}\big)\dd\y\Big) \dd\x \dd t\Big|
\nonumber\\
&=\Big|\int_{|\y|\le 1} j(\y)  \Big(\int_0^{T-\triangle t}\int_{\TP}
  \big(|w(t,\x)|^\gamma-w(t,\x){\rm sign}(w(t,\x-\delta \y))|w(t,\x-\delta \y)|^{\gamma-1}\big) \dd\x \dd t\Big)\dd\y\Big|
\nonumber\\
&\le \int_{|\y|\le 1} j(\y)  \Big(\int_0^{T-\triangle t}\int_{\TP}
  \big| |w(t,\x)|^\gamma-|w(t,\x-\delta \y)|^{\gamma}\big| \dd\x \dd t\Big)\dd\y
\nonumber\\
&\quad +\Big|\int_{|\y|\le 1} j(\y)  \Big(\int_0^{T-\triangle t}\int_{\TP}
  \big(w(t,\x)-w(t,\x-\delta \y)\big){\rm sign}(w(y,\x-\delta\y))|w(t,\x-\delta \y)|^{\gamma-1} \dd\x \dd t\Big)\dd\y\Big|
\nonumber\\
&\le \gamma
\int_{|\y|\le 1} j(\y)  \Big(\int_0^{T-\triangle t}\int_{\TP}
  |w(t,\x)-w(t,\x-\delta \y)| (|w(t,\x)|^{\gamma-1}+|w(t,\x-\delta \y)|^{\gamma-1}) \dd\x \dd t\Big)\dd\y
\nonumber\\
&\quad +\int_{|\y|\le 1} j(\y)  \Big(\int_0^{T-\triangle t}\int_{\TP}
  |w(t,\x)-w(t,\x-\delta \y)| |w(t,\x-\delta \y)|^{\gamma-1} \dd\x \dd t\Big)\dd\y
\nonumber\\
&\le
M_T \|w\|^{\gamma-1}_{L^\gamma([0,T]\times \TP)}\int_{|\y|\le 1} j(\y)  \Big(\int_0^{T-\triangle t}\int_{\TP}
  |w(t,\x)-w(t,\x-\delta \y)|^\gamma  \dd\x \dd t\Big)^{\frac{1}{\gamma}}
    \dd\y \nonumber\\
&\le M_T\delta^{\frac{\alpha_1}{\gamma}}\int_{|\y|\le 1} j(\y)|\y|^{\frac{\alpha_1}{\gamma}}\le M_T\delta^{\frac{\alpha_1}{\gamma}},
\label{4.6}
\end{align}
where we have used the fact that $\|w\|_{L^\gamma([0,T]\times \TP)}\le M_T$ from
Proposition \ref{coro:3.1}.

Combining \eqref{4.2}--\eqref{4.6}, we have
\begin{eqnarray*}
\int_0^{T-\triangle t}\int_{\TP} |\w(t,\x)|^\gamma \dd\x \dd t
\le M\inf_{\delta>0} \big\{\frac{\triangle t}{\delta^{4-\frac{3}{\gamma}}}+\delta^{\frac{\alpha_1}{\gamma}}\big\}.
\end{eqnarray*}
Choose
$$
\delta=|\triangle t|^{\frac{\gamma}{4\gamma+ \alpha_1-3}}.
$$
We conclude
\begin{equation}
\int_0^{T-\triangle t}\int_{\TP} |\w(t,\x)|^\gamma \dd\x \dd t
\le M (\triangle t)^{\frac{\alpha_1}{4\gamma-3+ \alpha_1}}.
\end{equation}

\medskip
2.  Now we estimate $\m(t,\x)$. For $t\in [0, T-\triangle t]$,
set
$$
\w(t,\cdot)=\m(t+\triangle t, \cdot)-\m(t, \cdot).
$$
Then, for any $\varphib(t,\x)\in C^\infty([0,T)\times\TP)$ that is periodic in $\x\in\R^3$ with
period $\TP$, we have
\begin{align}
\int_{\TP}\w(t,\x)\cdot \varphib(t,\x)\, \dd\x
&=\int_{t}^{t+\triangle t}\int_{\TP} \partial_s\m(s,\x)\cdot \varphib(t,\x)\, \dd\x \dd s\nonumber\\
&=\int_t^{t+\triangle t}\int_\TP
 (\rho \u\otimes\u)(s, \x): \nabla\varphib(t,\x) \, \dd\x \dd s \nonumber\\
& \quad - \int_t^{t+\triangle t}\int_{\TP} \Sigma(s,\x) :\nabla\varphib(t,\x)\,  \dd\x \dd s \nonumber\\
&\quad +\int_t^{t+\triangle t}\int_{\TP} p(s,\x) \nabla\cdot\varphib(t,\x)\, \dd\x
\dd s\nonumber\\
&\quad +\int_t^{t+\Delta t}\int_\TP (\rho\f)(s,\x)\cdot \varphib(t,\x) \dd\x \dd s,
\label{4.2c}
\end{align}
where $\nabla\varphib=(\partial_{x_j}\varphi_i)$ is the $3\times 3$ matrix.

By approximation, equality \eqref{4.2c} still holds
for
$$
\varphib\in L^\infty(0,T; H^1(\TP))\cap C([0,T]; L^2(\TP))\cap L^\infty([0,T)\times\TP).
$$
Choose
$$
\varphib=\varphib^\delta(t,\x):=(j_\delta*\w)(t,\x)=\int
j_\delta(\y)\w(t,\x-\y)\, \dd\y\in\R^3 \qquad\mbox{for $\delta>0$},
$$
which is periodic in $\x\in\R^3$ with period $\TP$, where $j_\delta(\x)=\frac{1}{\delta^3}j(\frac{\x}{\delta})\ge 0$
is the standard mollifier with \eqref{4.3b}.

Then, for any $\x\in \TP$, we have
\begin{align}
\|\varphib_\delta\|_{L^\infty(\TP)}
&\le  \frac{1}{\delta^3}
\int_{|\x-\y|\le \delta} j(\frac{\x-\y}{\delta})|\w(t,\y)|\, \dd\y
\le \frac{M_T}{\delta^{3}}\|j\|_{L^\infty}\|\sqrt{\rho(t,\cdot)}\, (\sqrt{\rho}\u)(t,\cdot)\|_{L^1(\TP)}\nonumber\\[1mm]
&\le \frac{M_T}{\delta^{3}}\|j\|_{L^\infty}\|\rho(t,\cdot)\|_{L^1}\|(\sqrt{\rho}\u)(t,\cdot)\|_{L^2(\TP)}
\le \frac{M_T}{\delta^{3}}.\label{4.3-phi-2a}
\end{align}
Similarly, we have
\begin{align}
\|\nabla\varphi_\delta\|_{L^\infty(\TP)}
\le\, \frac{1}{\delta^4}
\int_{|\x-\y|\le \delta} \big|j'(\frac{\x-\y}{\delta})\big| |\w(t,\y)|\, \dd\y
\le\, \frac{M_T}{\delta^{4}}\|j'\|_{L^\infty}\|\w(t,\cdot)\|_{L^1(\TP)}
\le\, \frac{M_T}{\delta^{4}}.\label{4.3-phi-3a}
\end{align}

Integrating \eqref{4.2c} in $t$ over $[0, T-\triangle t)$ with
$\varphib=\varphib^\delta(t,\x)$ and using \eqref{4.3-phi-3a}, we have
\begin{align}
&\int_0^{T-\triangle t}\int_{\TP} |\w(t,\x)|^2 \dd\x \dd t
\nonumber\\
&=\int_0^{T-\triangle t}\int_{\TP} \w(t,\x)\cdot \big(\w(t,\x)
-\varphib^\delta(t,\x)\big)\, \dd\x \dd t \nonumber\\
&\quad +\int_0^{T-\triangle t}\int_t^{t+\triangle t}\int_{\TP}
(\rho\u\otimes\u)(s,\x): \nabla\varphib^\delta (t,\x) \, \dd\x \dd s dt\nonumber\\
&\quad - \int_0^{T-\triangle t}\int_{\TP}\int_t^{t+\Delta t}
\Sigma(s,\x): \nabla\varphib^\delta(t,\x)\, \dd s \dd\x dt\nonumber\\
&\quad + \int_0^{T-\triangle t}\int_{\TP}\int_t^{t+\Delta t}
p(s,\x) \nabla\cdot\varphib^\delta(t,\x)\, \dd s \dd\x dt\nonumber\\
&\quad +\int_0^{T-\Delta t}\int_t^{t+\Delta t}\int_\TP
(\rho\f)(s,\x)\cdot \varphib^\delta(t,\x)\, \dd\x \dd s \dd t\nonumber\\
&=:J_1^\delta +J_2^\delta+J_3^\delta +J_4^\delta+J_5^\delta.
\label{4.2d}
\end{align}

For $J_1^\delta$, we have
\begin{align}
|J_1^\delta|&=\Big|\int_0^{T-\triangle t}\int_{\TP} \Big(\int_{\R^3}
j_\delta(\x-\y)\big(|\w(t,\x)|^2-\w(t,\x)\w(t,\y)\big)\dd\y\Big) \dd\x \dd t\Big|
\nonumber\\
&=\int_{|\y|\le 1} j(\y)  \Big(\int_0^{T-\triangle t}\int_{\TP}
  \big(|\w(t,\x)|^2-\w(t,\x)\w(t,\x-\delta \y)\big) \dd\x \dd t\Big)\dd\y
\nonumber\\
&\le \int_{|\y|\le 1} j(\y)  \Big(\int_0^{T-\triangle t}\int_{\TP}
  |\w(t,\x)-\w(t,\x-\delta \y)| \big(|\w(t,\x)|+|\w(t,\x-\delta \y)|\big) \dd\x \dd t\Big)\dd\y
\nonumber\\
&\quad +\int_{|\y|\le 1} j(\y)  \Big(\int_0^{T-\triangle t}\int_{\TP}
  |\w(t,\x)-\w(t,\x-\delta \y)| |\w(t,\x-\delta \y)|\dd\x \dd t\Big)\dd\y
\nonumber\\
&\le
M_T\|\w\|_{L^2([0,T]\times \TP)}\int_{|\y|\le 1} j(\y)  \Big(\int_0^{T-\triangle t}\int_{\TP}
  |\w(t,\x)-\w(t,\x-\delta \y)|^2  \dd\x \dd t\Big)^{\frac{1}{2}}
    \dd\y
\nonumber\\
&\le M_T\delta^{\frac{\alpha_1}{2}}\int_{|\y|\le 1}j(\y)|\y|^{\frac{\alpha}{2}}d\y
\le M_T\delta^{\frac{\alpha_1}{2}},
\label{4.4c}
\end{align}
where we have used the fact that $\|\w\|_{L^2([0,T]\times \TP)}\le M_T$ from
Proposition \ref{coro:3.1}.

For $J_2^\delta$ and $J_3^\delta$, we find
\begin{equation}\label{4.5c}
|J_2^\delta|
\le \frac{M_T\Delta t}{\delta^4}\|\sqrt{\rho} \u\|^2_{L^1(\TP)}
\le  \frac{M_T\Delta t}{\delta^4}\mathcal{E}_0\le \frac{M_T\Delta t}{\delta^4},
\end{equation}
and
\begin{align}\label{4.6c}
|J_3^\delta|
&\le \frac{M_T}{\delta^4}
\int_0^{T-\triangle t}\int_{\TP}\int_t^{t+\triangle t}
|\Sigma| \,\dd\x \dd s \dd t\nonumber\\
&\le \frac{M_T\Delta t}{\delta^4}
\int_0^{T-\triangle t}\int_{\TP}
|\Sigma| \,\dd\x \dd s\nonumber\\
&\le \frac{M_T\Delta t}{\delta^4}
\Big(\int_0^{T-\triangle t}\int_{\TP}
|\Sigma|^2 \dd\x \dd s\Big)^2 \nonumber\\
&
\le \frac{M_T\Delta t}{\delta^4},
\end{align}
since $\max\{\mu, |\lambda|\}\le \mu_0$ for some fixed $\mu_0$ throughout the paper.

For $J_4^\delta$ and $J_5^\delta$, we see
\begin{eqnarray}\label{4.7c}
|J_4^\delta|
\le \frac{M_T\Delta t}{\delta^4}\|\rho\|_{L^\gamma([0,T]\times \TP)}^{\gamma}
\le \frac{M_T\Delta t}{\delta^4},
\end{eqnarray}
and
\begin{eqnarray}\label{4.8c}
|J_5^\delta|
\le M_T\Delta t \|\rho \f\|_{L^1}\|\varphi^\delta\|_{L^\infty}
\le \frac{M_T\Delta t}{\delta^{3}}\|\rho\|_{L^\gamma(\TP)}\|\f\|_{L^1(0,T; L^{\frac{2\gamma}{\gamma-1}}(\TP))}
\le \frac{M_T\Delta t}{\delta^{3}}.
\end{eqnarray}

Therefore, combining \eqref{4.2d}--\eqref{4.8c} together, we have
$$
\int_0^{T-\triangle t}\int_{\TP}
|\w(t,\x)|^2 \dd\x \dd t
\le M_T\big(\delta^{\frac{\alpha_1}{2}}+\frac{\Delta t}{\delta^4}+ \frac{\Delta t}{\delta^{3}}\big)
\le M_T\big(\delta^{\frac{\alpha_1}{2}}+\frac{\Delta t}{\delta^4}\big)
\le M_T(\Delta t)^{\frac{\alpha_1}{8+\alpha_1}}.
$$
Choosing
$$
\alpha_2=\min\{\frac{\alpha_1}{4\gamma-3+ \alpha_1}, \frac{\alpha_1}{8+\alpha_1}\},
$$
we complete the proof.
\end{proof}

As a direct corollary, we have

\begin{proposition}\label{prop:4.2}
Under {\rm Assumption (CKHw)}, for any $T\in (0, \infty)$,
there exists $M_T>0$,
independent of $\mu>0$,
such that
\begin{equation}\label{4.8}
\int_0^T\int_{\TP} \Big(\big|(D^{\frac{\alpha_2}{\gamma}}_t, D^{\frac{\alpha_1}{\gamma}}_\x)\rho^\mu(t,\x)\big|^\gamma
 + \big|(D^{\frac{\alpha_2}{2}}_t, D^{\frac{\alpha_1}{2}}_\x)  \m^\mu(t,\x)\big|^2\Big) \dd\x \dd t \le M_T<\infty
\end{equation}
for some $\alpha_1>0$ and $\alpha_2=\min\{\frac{\alpha_1}{4\gamma-3+ \alpha_1}, \frac{\alpha_1}{8+\alpha_1}\}>0$.
\end{proposition}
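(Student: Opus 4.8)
The plan is to obtain Proposition \ref{prop:4.2} as a direct corollary of the uniform $L^p$ bounds together with the spatial and temporal modulus-of-continuity estimates already established, by invoking the classical finite-difference (Nikolskii--Besov) characterization of fractional Sobolev regularity. For the momentum, the three ingredients are: the uniform bound $\|\m^\mu\|_{L^2([0,T]\times\TP)}\le M_T$ from \eqref{3.4a}; the spatial estimate $\int_0^T\int_\TP|\m^\mu(t,\x+\Delta\x)-\m^\mu(t,\x)|^2\,\dd\x\,\dd t\le M_T|\Delta\x|^{\alpha_1}$ from \eqref{equi-continuity-x-b}; and the temporal estimate $\int_0^{T-\Delta t}\int_\TP|\m^\mu(t+\Delta t,\x)-\m^\mu(t,\x)|^2\,\dd\x\,\dd t\le M_T(\Delta t)^{\alpha_2}$ from \eqref{4.1}. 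For the density, the same two propositions supply the spatial and temporal $L^\gamma$-difference estimates with the identical exponents $\alpha_1$ and $\alpha_2$, and the needed uniform bound $\|\rho^\mu\|_{L^\infty(0,T;L^\gamma(\TP))}\le M_T$ follows from \eqref{energy-estimate} because $|\sqrt{\rho^\mu}\,c(\rho^\mu)|^2=\kappa\,(\rho^\mu)^\gamma$ by \eqref{1.3} and \eqref{1.2d}.

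I would then treat each shift direction separately and use, on the bounded cylinder $[0,T]\times\TP$, the equivalence between the Gagliardo seminorm of order $s\in(0,1)$ in $L^p$ and the averaged finite-difference quotient: for a function $g$ of $d$ variables being shifted, $\iint|g(z)-g(z')|^p|z-z'|^{-d-sp}\,\dd z\,\dd z'$ is comparable to $\int_{|h|\le R}\|g(\cdot+h)-g\|_{L^p}^p\,|h|^{-d-sp}\,\dd h$. For the spatial derivative of $\m^\mu$ we take $p=2$, $d=3$; then this quantity is bounded by $M_T\int_{|h|\le R}|h|^{\alpha_1-3-2s}\,\dd h$, finite exactly when $s<\alpha_1/2$, while for the time derivative we take $p=2$, $d=1$ and get finiteness when $s<\alpha_2/2$. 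For $\rho^\mu$ the same computation with $p=\gamma$ gives convergence of the spatial seminorm (kernel $|h|^{-3-s\gamma}$ against $|h|^{\alpha_1}$) when $s<\alpha_1/\gamma$ and of the temporal one when $s<\alpha_2/\gamma$; in every case the contribution of the large shifts $|h|>R$ is absorbed by the uniform $L^2$, respectively $L^\gamma$, bound. Since $|(a,b)|^\gamma\le C(|a|^\gamma+|b|^\gamma)$ and $|(a,b)|^2=|a|^2+|b|^2$, summing the space and time contributions yields \eqref{4.8}, with $(D_t^{\cdot},D_\x^{\cdot})$ read as the pair of one-parameter fractional derivatives rather than as a genuine mixed derivative.

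I do not anticipate a real obstacle, the statement being essentially a reformulation. The only point meriting a remark is that at the exact threshold orders $\alpha_1/\gamma,\alpha_2/\gamma$ (respectively $\alpha_1/2,\alpha_2/2$) the finite-difference integral diverges logarithmically, so \eqref{4.8} is to be understood either in the Nikolskii sense --- where the supremum characterization $\sup_{h}|h|^{-s}\|g(\cdot+h)-g\|_{L^p}<\infty$ does hold at the endpoint --- or with the fractional orders chosen strictly below these values, which costs nothing because $\alpha_1$, and hence $\alpha_2$, is already an arbitrarily small constant that may simply be decreased. It is also worth stating explicitly that the anisotropy between the time and space regularity is accommodated by handling the two shift directions independently, so that no mixed fractional-derivative estimate is required.
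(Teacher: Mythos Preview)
Your proposal is correct and matches the paper's approach: the paper presents Proposition~\ref{prop:4.2} with the single line ``As a direct corollary, we have'' immediately after Proposition~\ref{prop:4.1}, so the intended argument is exactly the passage from the finite-difference estimates of Propositions~\ref{coro:3.1} and~\ref{prop:4.1} to fractional regularity via the Nikolskii/Besov characterization, which you have spelled out in detail. Your remark about the endpoint exponents is apt and consistent with the paper's formulation ``for some $\alpha_1>0$'', which leaves room to decrease $\alpha_1$ slightly if one insists on the Gagliardo (integral) rather than the Nikolskii (supremum) seminorm.
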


\medskip

Combining Proposition 3.1 with Proposition 4.2, we have
\begin{equation}\label{4.9}
\big\|\rho^\mu\big\|_{W^{\hat{\alpha}_1,\gamma}([0, T)\times\TP)}+\big\|\m^\mu\big\|_{H^{\hat{\alpha}_2}([0, T)\times\TP)}\le M_T<\infty,
\end{equation}
where $\hat{\alpha}_1=\frac{1}{\gamma}\min\{\alpha_1,\alpha_2\}$
and $\hat{\alpha}_2=\frac{1}{2}\min\{\alpha_1,\alpha_2\}$.
Then, by the Sobolev imbedding theorem, we have

\begin{proposition}\label{prop:4.3}
Under {\rm Assumption (CKHw)},  for any $T\in (0, \infty)$,
there exists $M_T>0$,
independent of $\mu>0$,
such that
\begin{equation}\label{3.2}
\big\|\rho^\mu\big\|_{{L^{q_1}([0, T)\times\TP)}}+  \big\|\m^\mu\big\|_{L^{q_2}([0, T)\times\TP)}\le M_T<\infty
\end{equation}
for some $q_1>\gamma$ and $q_2>2$.
\end{proposition}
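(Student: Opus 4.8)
The plan is to derive Proposition~\ref{prop:4.3} from the uniform space--time fractional Sobolev bound \eqref{4.9} by a single application of the fractional Sobolev embedding theorem on the four--dimensional cylinder $Q_T:=[0,T)\times\TP$, where the first coordinate is $t$ and the remaining three are $\x$. Since $\TP$ is a torus, $Q_T$ has no lateral boundary and is a smooth (in particular Lipschitz) domain; hence $W^{s,p}(Q_T)$ admits a bounded extension operator into $W^{s,p}(\R^4)$ for every $s>0$, and, equivalently, the intrinsic embedding theorem on bounded Lipschitz domains applies. In particular, for $0<s$ with $sp<4$ one has $W^{s,p}(Q_T)\hookrightarrow L^{p^*}(Q_T)$ with $\frac{1}{p^*}=\frac1p-\frac{s}{4}$, while if $sp\ge 4$ one has $W^{s,p}(Q_T)\hookrightarrow C(\overline{Q_T})$, which is even stronger; in both cases the embedding constant depends only on $s$, $p$, and $T$, and in particular not on $\mu$.

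Before invoking the embedding I would record why \eqref{4.9} holds. Its spatial part comes from Propositions~\ref{prop:3.1} and \ref{coro:3.1}: estimate \eqref{equi-continuity-x-b} gives, uniformly in $\mu>0$, a Gagliardo--type control of $\rho^\mu$ of order $\frac{\alpha_1}{\gamma}$ in $\x$ in $L^\gamma(Q_T)$ and of $\m^\mu$ of order $\frac{\alpha_1}{2}$ in $\x$ in $L^2(Q_T)$, while its temporal part comes from Proposition~\ref{prop:4.1}, which provides the corresponding control in $t$ of orders $\frac{\alpha_2}{\gamma}$ and $\frac{\alpha_2}{2}$; these are collected in Proposition~\ref{prop:4.2}. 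To merge the separate (directional) fractional bounds in $t$ and in $\x$ into membership in the isotropic space--time spaces $W^{\hat\alpha_1,\gamma}(Q_T)$ and $H^{\hat\alpha_2}(Q_T)$ with $\hat\alpha_1=\frac1\gamma\min\{\alpha_1,\alpha_2\}$ and $\hat\alpha_2=\frac12\min\{\alpha_1,\alpha_2\}$, I would use the difference--quotient characterisation of $W^{s,p}$: writing
\[
f(t+\tau,\x+h)-f(t,\x)=\bigl(f(t+\tau,\x+h)-f(t,\x+h)\bigr)+\bigl(f(t,\x+h)-f(t,\x)\bigr),
\]
the mixed increment is bounded by a pure time increment plus a pure space increment, each already controlled at the stated order; taking $s=\min$ of the two available orders renders the full space--time Gagliardo seminorm on $Q_T$ finite and bounded uniformly in $\mu$, which is exactly \eqref{4.9}.

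It then remains to read off the exponents. Applying the embedding to $\rho^\mu\in W^{\hat\alpha_1,\gamma}(Q_T)$ yields $\rho^\mu\in L^{q_1}(Q_T)$ with
\[
\frac{1}{q_1}=\frac1\gamma-\frac{\hat\alpha_1}{4}=\frac{4-\min\{\alpha_1,\alpha_2\}}{4\gamma},
\qquad\text{so}\qquad
q_1=\frac{4\gamma}{4-\min\{\alpha_1,\alpha_2\}}>\gamma,
\]
and applying it to $\m^\mu\in H^{\hat\alpha_2}(Q_T)=W^{\hat\alpha_2,2}(Q_T)$ yields $\m^\mu\in L^{q_2}(Q_T)$ with
\[
\frac{1}{q_2}=\frac12-\frac{\hat\alpha_2}{4}=\frac{4-\min\{\alpha_1,\alpha_2\}}{8},
\qquad\text{so}\qquad
q_2=\frac{8}{4-\min\{\alpha_1,\alpha_2\}}>2.
\]
Since $\alpha_1\in(0,\tfrac\beta4)$ may be taken small and $\alpha_2=\min\{\tfrac{\alpha_1}{4\gamma-3+\alpha_1},\tfrac{\alpha_1}{8+\alpha_1}\}$ is smaller still, we have $\gamma\hat\alpha_1=2\hat\alpha_2=\min\{\alpha_1,\alpha_2\}\in(0,4)$, so the subcritical case of the embedding is the relevant one and $q_1,q_2$ are finite; in any event the conclusion $q_1>\gamma$, $q_2>2$ uses only $\min\{\alpha_1,\alpha_2\}>0$. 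The bound \eqref{3.2}, with $M_T$ independent of $\mu$, is then inherited from the bound in \eqref{4.9} through the ($\mu$--independent) embedding constant.

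The only steps that are not entirely routine are (a) the upgrade from separate fractional regularity in $t$ and in $\x$ to joint space--time fractional regularity, cleanly handled by the difference--quotient characterisation above, and (b) making sense of the Sobolev embedding up to the temporal endpoints $t=0$ and $t=T$, handled by the extension operator (or intrinsic embedding) on the Lipschitz cylinder $Q_T$. Neither is a genuine obstacle here, thanks to the product structure of $Q_T$ and the absence of any spatial boundary; everything else reduces to the textbook fractional Sobolev embedding in $\R^4$.
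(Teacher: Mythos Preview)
Your proposal is correct and follows essentially the same route as the paper: both derive the conclusion directly from the space--time fractional Sobolev bound \eqref{4.9} by a single application of the Sobolev embedding theorem on $[0,T)\times\TP$. The paper's proof is in fact just the one line ``by the Sobolev imbedding theorem''; your write-up supplies the details the paper omits --- the passage from separate $t$-- and $\x$--fractional regularity to the isotropic space--time norm via the increment splitting, the handling of the temporal endpoints via extension on the Lipschitz cylinder, and the explicit values of $q_1$ and $q_2$ --- but the underlying argument is identical.
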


\medskip

\section{Inviscid Limit}
In this section, we show the inviscid limit from the compressible Navier-Stokes to the Euler equations
under \textit{Assumption} (CKHw).

\begin{theorem}\label{thm:5.1}
The
compressible Kolmogorov-type hypothesis, {\rm Assumption (CKHw)}, implies the strong compactness in
$(L^{q_1}\times L^{q_2})([0, T)\times\TP)$
of the solutions $(\rho^\mu, \m^\mu)(t,\x)$ of the compressible Navier-Stoke equations \eqref{eq:ns} in $\R^3_T$
in the sense of Definition {\rm \ref{dfn:ns}} when
the viscosity $\mu$ tends to zero, for some $q_1>\gamma$ and $q_2>2$.
That is,
there exist a subsequence {\rm (}still denoted{\rm )} $(\rho^\mu, \m^\mu)(t,\x)$
and a function $(\rho, \m)\in  (L^{q_1}\times L^{q_2})([0,T)\times\TP)$
such that
$$
(\rho^\mu, \m^\mu) (t, \x)\to (\rho, \m)(t,\x) \qquad \mbox{{\it a.e.} as } \, \mu\to 0,
$$
and $(\rho, \m)(t,\x)$ is a weak solution of the
compressible Euler equations \eqref{eq:euler} with Cauchy data $(\rho_0,\m_0)(\x)$ in the sense
of Definition {\rm \ref{dfn:euler}}.
\end{theorem}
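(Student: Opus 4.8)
The plan is to combine the uniform estimates established in Sections 3--4 via the Aubin--Lions--Simon compactness framework, and then pass to the limit in the weak formulations \eqref{euler-1}--\eqref{euler-3}. First I would invoke Proposition~\ref{prop:4.3} to obtain the uniform bounds $\|\rho^\mu\|_{L^{q_1}([0,T)\times\TP)}+\|\m^\mu\|_{L^{q_2}([0,T)\times\TP)}\le M_T$ with $q_1>\gamma$, $q_2>2$. The compactness itself comes from Propositions \ref{coro:3.1} and \ref{prop:4.1}: the uniform space-translation estimate \eqref{equi-continuity-x-b} together with the uniform time-translation estimate \eqref{4.1} gives, by the Fréchet--Kolmogorov--Riesz criterion, relative compactness of $\{\rho^\mu\}$ in $L^\gamma([0,T)\times\TP)$ and of $\{\m^\mu\}$ in $L^2([0,T)\times\TP)$. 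Hence along a subsequence $\rho^\mu\to\rho$ in $L^\gamma$, $\m^\mu\to\m$ in $L^2$, and (after a further diagonal subsequence) \emph{a.e.} in $[0,T)\times\TP$. Interpolating the \emph{a.e.} convergence against the higher-integrability bounds of Proposition~\ref{prop:4.3}, I get $\rho^\mu\to\rho$ strongly in $L^r$ for every $r<q_1$ and $\m^\mu\to\m$ strongly in $L^r$ for every $r<q_2$; in particular the limits lie in $(L^{q_1}\times L^{q_2})([0,T)\times\TP)$ by weak lower semicontinuity of the norm.

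Next I would pass to the limit in the equations. The continuity equation \eqref{euler-1} is linear in $(\rho^\mu,\m^\mu)$, so strong (even weak) convergence suffices; the initial-data term is handled using $\rho^\mu\in C([0,T];L^\gamma_{\mathrm w})$ with $\rho^\mu|_{t=0}=\rho_0$. For the momentum equation \eqref{euler-2} the delicate terms are $\frac{\m^\mu\otimes\m^\mu}{\rho^\mu}=\rho^\mu\u^\mu\otimes\u^\mu$ and the pressure $p(\rho^\mu)=\kappa(\rho^\mu)^\gamma$. The viscous term $\nabla\cdot\Sigma^\mu(\nabla\u^\mu)$ vanishes in the limit: testing against a fixed $\varphib\in C_0^\infty$, it equals $-\int\Sigma^\mu:\nabla\varphib$, and since $\|\nabla\u^\mu\|_{L^2}\le M_T\mu^{-1/2}$ while $\Sigma^\mu=2\mu D(\nabla\u^\mu)+\lambda(\nabla\cdot\u^\mu)\mathbf I$ with $\lambda=\lambda(\mu)\to0$, we get $\|\Sigma^\mu\|_{L^1}\le C(\mu+|\lambda|)\mu^{-1/2}\to0$. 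For the pressure, $\rho^\mu\to\rho$ \emph{a.e.} plus the uniform bound in $L^{q_1/\gamma}$ with $q_1/\gamma>1$ gives $(\rho^\mu)^\gamma\to\rho^\gamma$ strongly in $L^1_{\mathrm{loc}}$ by Vitali's theorem. For the convective term I write $\rho^\mu\u^\mu\otimes\u^\mu=\frac{\m^\mu\otimes\m^\mu}{\rho^\mu}$ and argue: $\sqrt{\rho^\mu}\u^\mu$ is bounded in $L^\infty(0,T;L^2)$ and, by Proposition~\ref{prop:3.1}, in $L^2(0,T;H^\alpha)$, hence (again by Fréchet--Kolmogorov, using \eqref{equi-continuity-x-a} and the time-equicontinuity that follows from the momentum equation as in Proposition~\ref{prop:4.1}) it converges strongly in $L^2([0,T)\times\TP)$ to some limit which, combined with $\sqrt{\rho^\mu}\to\sqrt\rho$ \emph{a.e.}, must be $\sqrt\rho\,\u$ on $\{\rho>0\}$; on $\{\rho=0\}$ one uses $|\m^\mu|^2/\rho^\mu=|\sqrt{\rho^\mu}\u^\mu|^2\to0$. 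Writing the product as $(\sqrt{\rho^\mu}\u^\mu)\otimes(\sqrt{\rho^\mu}\u^\mu)$, strong $L^2\times L^2$ convergence gives convergence in $L^1$, which is enough to pass to the limit against $\nabla\varphib\in L^\infty$.

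Finally, the energy inequality \eqref{euler-3} follows from \eqref{energy-ineq-3}: the dissipation terms $\mu|\nabla\u^\mu|^2+(\lambda+\mu)|\nabla\cdot\u^\mu|^2\ge0$ are dropped, and weak lower semicontinuity of the convex functionals $\v\mapsto\int|\sqrt{\cdot}\,\v|^2$ and $\rho\mapsto\int\rho^\gamma$ along the \emph{a.e.}\ convergent subsequence yields $\int_\TP E\,\dd\x\le\liminf\int_\TP E^\mu\,\dd\x$, while the forcing term $\int_0^t\int_\TP\m^\mu\cdot\f$ converges because $\m^\mu\to\m$ strongly in $L^{q_2}$ with $q_2>2\ge\frac{2\gamma}{\gamma+1}$ and $\f\in L^1_t L^{2\gamma/(\gamma-1)}_x$, the dual exponent being controlled by the energy bound on $\sqrt{\rho^\mu}c^\mu$.

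\medskip
\textbf{Main obstacle.} I expect the hardest point to be identifying the weak limit of the convective term $\rho^\mu\u^\mu\otimes\u^\mu$ as $\rho\,\u\otimes\u$ and, relatedly, making sense of $\u$ and of $\frac{\m\otimes\m}{\rho}$ on the vacuum set $\{\rho=0\}$ in a way consistent with Definition~\ref{dfn:euler}. The resolution hinges on upgrading the weak convergence of $\sqrt{\rho^\mu}\u^\mu$ to \emph{strong} convergence in $L^2([0,T)\times\TP)$ --- which is exactly what the space-equicontinuity from Proposition~\ref{prop:3.1} (via \eqref{equi-continuity-x-a}) combined with a time-equicontinuity estimate delivers --- so that the dyadic product passes to the limit in $L^1$. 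A secondary subtlety is the gain of strict higher integrability $q_2>2$ for $\m^\mu$ claimed in Proposition~\ref{prop:4.3}: this must come from interpolating the space-time fractional regularity \eqref{4.9} through the Sobolev embedding in $\R^4=\R_t\times\R^3_\x$, and one should check that the resulting exponent genuinely exceeds $2$ (and $\gamma$ for $\rho^\mu$) uniformly in $\mu$.
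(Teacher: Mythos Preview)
Your overall architecture matches the paper's: Fr\'echet--Kolmogorov--Riesz compactness from Propositions~\ref{coro:3.1} and~\ref{prop:4.1}, higher integrability from Proposition~\ref{prop:4.3}, interpolation to upgrade the convergence, and then passage to the limit term by term. Your treatment of the continuity equation, the pressure, the viscous term, and the energy inequality is correct and essentially what the paper does.

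The one genuine gap is in your handling of the convective term. You propose to obtain \emph{strong} $L^2$ convergence of $\sqrt{\rho^\mu}\u^\mu$ by invoking ``the time-equicontinuity that follows from the momentum equation as in Proposition~\ref{prop:4.1}''. But Proposition~\ref{prop:4.1} establishes time-equicontinuity only for $\rho^\mu$ and $\m^\mu=\rho^\mu\u^\mu$, each of which satisfies an explicit evolution law that one can integrate against a mollified test function. The quantity $\sqrt{\rho^\mu}\u^\mu$ does not satisfy a usable equation, and writing $\sqrt{\rho^\mu}\u^\mu=\m^\mu/\sqrt{\rho^\mu}$ offers no control near vacuum. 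So the time-equicontinuity you need is not available from the results already proved, and your route to strong $L^2$ convergence of $\sqrt{\rho^\mu}\u^\mu$ is not justified.

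The paper sidesteps this entirely. From Proposition~\ref{coro:3.1}(i) the sequence $\M^\mu:=\rho^\mu\u^\mu\otimes\u^\mu=(\sqrt{\rho^\mu}\u^\mu)\otimes(\sqrt{\rho^\mu}\u^\mu)$ is bounded in $L^q$ for some $q>1$, hence has a weak limit $\M$. No time regularity of $\sqrt{\rho^\mu}\u^\mu$ is needed for this. The identification of $\M$ with $\frac{\m\otimes\m}{\rho}$ then comes from the algebraic identity $\m^\mu\otimes\m^\mu=\rho^\mu\M^\mu$: the left side converges strongly in $L^1$ (since $\m^\mu\to\m$ strongly in $L^2$), while on the right $\rho^\mu\to\rho$ strongly and $\M^\mu\rightharpoonup\M$ weakly, so the product passes to the limit and $\m\otimes\m=\rho\,\M$ a.e. This also resolves the vacuum issue you flag as the main obstacle: $\frac{\m\otimes\m}{\rho}$ is simply \emph{defined} to be $\M\in L^q$, which agrees with the natural quotient on $\{\rho>0\}$ and is globally integrable. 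The same trick handles $\frac{|\m|^2}{\rho}$ in the energy inequality. You should replace your strong-$L^2$ argument for $\sqrt{\rho^\mu}\u^\mu$ with this weak--strong product identification.
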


\medskip

\begin{proof}  We divide the proof into six steps.

\medskip
1.  Propositions 3.2 and 4.1 imply the $(L^\gamma \times L^2)$--equicontinuity of $(\rho^\mu, \m^\mu)(t,\x)$
in $(t,\x)\in [0, T)\times\TP$, independent of $\mu>0$.
This yields that there exist both a subsequence (still denoted) $(\rho^\mu,\m^\mu)(t,\x)$
and a function $(\rho, \m)(t,\x)\in L^\gamma\times L^2$ such that
$$
(\rho^\mu, \m^\mu)(t,\x)\to (\rho, \m)(t,\x)      \qquad \mbox{in}\,\, L^\gamma\times L^2 \,\,\mbox{as $\mu\to 0$},
$$
which implies that
$$
(\rho^\mu,\m^\mu)(t,\x)\to (\rho, \m)(t,\x)      \qquad \mbox{\it a.e.}\,\,\mbox{as $\mu\to 0$}.
$$

\medskip
2. From Proposition \ref{prop:4.3}, we have
$$
(\rho, \m)\in  (L^{q_1}\times L^{q_2})([0, T)\times\TP),
$$
and
$$
(\rho^\mu, \m^\mu)(t,\x)\to (\rho,\m)(t,\x)
 \qquad \mbox{in}\,\, (L^{p_1}\times L^{p_2})([0, T)\times\TP) \,\,\mbox{as $\mu\to 0$}.
$$
for any $p_1\in [1, q_1)$ and $p_2\in [1, q_2)$.

\medskip
3. Proposition 3.2 implies that
$$
\M^\mu:=\rho^\mu \u^\mu\otimes \u^\mu  \rightharpoonup \M\in L^q
\qquad \mbox{weakly in $L^q$ for some $q>1$}.
$$

Since
$$
\m^\mu\otimes \m^\mu=\M^\mu \rho^\mu,
$$
then the limits satisfy
$$
\m\otimes \m=\M\rho \qquad a.e.
$$
which implies that
$\m\otimes \m$ is absolutely continuous with respect to $\rho$ with an $L^q$ density, uniformly in $\rho(t,\x)\ge 0$.
That is, $\frac{\m\otimes \m}{\rho}$ is well-defined even on the vacuum $\{\rho(t,\x)=0\}$, and
$\frac{\m\otimes \m}{\rho}=\M$ {\it a.e.} on the non-vacuum state $\{\rho(t,\x)>0\}$.
Therefore, we define
$$
\frac{\m\otimes \m}{\rho}=\M \in L^q \qquad \mbox{for $q>1$}.
$$

This implies that
\begin{equation}\label{5.1}
\frac{\m^\mu\otimes \m^\mu}{\rho^\mu}(t,\x)\to \frac{\m\otimes\m}{\rho}(t,\x)\in L^q  \qquad \mbox{\it a.e.}\,\,\mbox{as $\mu\to 0$}
\end{equation}
for $q>1$.

\medskip
4. Similarly, we notice that
$$
V^\mu:=\rho^\mu  |\u^\mu|^2  \to V\in L^q
\qquad \mbox{weakly in $L^q, q>1$}.
$$

Since
$$
|\m^\mu|^2=V^\mu\rho^\mu,
$$
then the limits satisfy
$$
|\m|^2=V\rho \qquad a.e.
$$
which implies that
$|\m|^2$ is absolutely continuous with respect to $\rho$ with an $L^q$ density, uniformly in $\rho(t,\x)\ge 0$.
That is, $\frac{|\m|^2}{\rho}$ is well-defined even on the vacuum $\{\rho(t,\x)=0\}$, and
$\frac{|\m|^2}{\rho}=V$ {\it a.e.} on the non-vacuum state $\{\rho(t,\x)>0\}$.
Therefore, we define
$$
\frac{|\m|^2}{\rho}=V\in L^q \qquad \mbox{for $q>1$}.
$$
This implies that
\begin{equation}\label{5.1b}
\frac{|\m^\mu|^2}{\rho^\mu}(t,\x)\to \frac{|\m|^2}{\rho}(t,\x)\in L^q  \qquad \mbox{\it a.e.}\,\,\mbox{as $\mu\to 0$}
\end{equation}
for $q>1$.

\medskip
5. For any $\varphi\in C_0^\infty(\R_T^3; \R)$,
we multiply  both sides \eqref{eq:ns} by $\varphi$ and integrate over $\R_T^3$ to
obtain
\begin{eqnarray*}
\int_0^T\int_{\R^3}\big(\rho^\mu \varphi_t + \m^\mu\cdot \nabla \varphi
      \big)\, \dd\x \dd t
+\int_{\R^3} \rho_0(\x) \varphi(0,\x)\, \dd\x=0.
\end{eqnarray*}
Letting $\mu\to 0$, we conclude \eqref{euler-1}.

6. For any $\varphib\in C_0^\infty(\R_T^3;\R^3)$,
we multiply  both sides \eqref{eq:ns} by $\varphib$ and integrate over $\R_T^3$ to
obtain
\begin{eqnarray*}
&&\int_0^T\int_{\R^3}\big(\m^\mu\cdot \varphib_t + \frac{\m^\mu\otimes\m^\mu}{\rho^\mu}:\nabla \varphib
+ p(\rho^\mu) \div \varphi
+\rho^\mu \f\cdot \varphib\big)\, \dd\x \dd t
+\int_{\R^3} \m_0(\x)\cdot \varphib(0,\x)\, \dd\x\\
&&=-\int_0^T\int_{\R^3} \Sigma^\mu: \nabla \varphib\, \dd\x \dd t\\
&&=-2\mu \int_0^T\int_{\R^3} D(\nabla \u^\mu): \nabla \varphib\, \dd\x \dd t
   -\lambda\int_0^T\int_{\R^3} (\nabla\cdot\u^\mu)^2 {\rm Tr}(\nabla\varphib)\, \dd\x \dd t.
\end{eqnarray*}
Notice that
\begin{eqnarray*}
&&\big|\mu \int_0^T\int_{\R^3} D(\nabla\u^\mu): \nabla \varphib\, \dd\x \dd t\big|
\le M_T\sqrt{\mu}\,\big\|\sqrt{\mu}\nabla\u^\mu\big\|_{L^2}\|\nabla\varphib\|^2_{L^2}\le \sqrt{\mu}\,M_T,\\
&&\big|\lambda\int_0^T\int_{\R^3} (\nabla\cdot\u^\mu)^2 {\rm Tr}(\Delta\varphib) \, \dd\x \dd t\big|
\le M_T\sqrt{|\lambda|}\big\|\sqrt{|\lambda|}\nabla\cdot\u^\mu\big\|_{L^2}\|\nabla\varphib\|^2_{L^2}\le \sqrt{|\lambda|}\,M_T.
\end{eqnarray*}
Letting $\mu\to 0$, we conclude \eqref{euler-2}.

\medskip
Integrating \eqref{energy-ineq} over $[0, T)\times\TP$ yields
\begin{align*}
\int_{\TP} E^\mu(t,\x) \dd\x
&=\int_{\TP} \big(\frac{1}{2}\frac{|\m^\mu|^2}{\rho}+\frac{\rho^\mu c(\rho^\mu)}{\gamma-1}\big)(t,\x) \dd\x\nonumber\\
&\le \int_{\TP} E_0(\x) \dd\x
+\int_0^t\int_\TP \m^\mu(s,\x)\cdot\f(s,\x) \dd\x \dd s.
\end{align*}
Letting $\mu\to 0$, we conclude \eqref{euler-3}.

Combining \eqref{euler-1}--\eqref{euler-3} together yields that $(\rho, \m)(t,\x)$ is a weak
solution of the Euler equations \eqref{eq:euler} with Cauchy
data \eqref{1.2} in the sense of Definition \ref{dfn:euler}.
This completes the proof.
\end{proof}

\section{Comments on the Main Results from a Physics Perspective}

While the extension of these results to general $\beta > 0$ is interesting,
the deepest conclusions result in the extensions beyond the K41 exponent
$-\frac{5}{3}$. For incompressible turbulence,
such $\beta$ values lead to a fractal dimension $d_f < 3$ of turbulence.
In other words, turbulence occurs on a fractal set of dimension strictly
less than $3$. In fact, there are two fractal sets for turbulence,
associated with the strain rate and the enthalpy
({\it cf.} \cite{GliMah18}),
with values
$d_f = 2.61 \pm 0.03$ for the strain rate fractal and
$d_f = 2.22 \pm 0.03$ for the enstrophy fractal.  These estimates
result from the analysis of the finely resolved turbulence data base
({\it cf.} \cite{LiPerWan08,PerBurLi07}).
The value for the
strain rate fractal is consistent with a variety of simulation and
experimental values reported by
others ({\it e.g.} \cite{Frisch}),
while the enstrophy fractal has received less prior attention ({\it cf.} \cite{Bra91}).
Multifractal theories ({\it cf.} \cite{Frisch})
for incompressible turbulence identify a continuous infinity of additional
fractal sets derived from the basic ones
of strictly decreasing fractal dimensions
and associated with the higher order statistics of the two point correlation
function.
Multifractal ideas motivate a number of simulation studies of compressible
turbulence, as discussed in the introduction.

It is a strict mathematical
result for the incompressible Euler equations with (CKHi) ({\it i.e.},
$\beta > \frac{2}{3}$, corresponding to the Kolmogorov exponent $\frac{5}{3}$)
that the total system energy is constant as a function
of time ({\it cf.} \cite{CCFS}).

An extension of the Kolmogorov hypothesis (K62) in \cite {Kolmogorov3}
and its corrections to compressible turbulence is a
plausible conjecture, and adds weight to our belief in the existence and
uniqueness of the compressible Euler equations in a regime of fully
developed turbulence. We do not assert that these hypotheses apply to
all solutions of the compressible Euler equations.
The Taylor-Green vortex, which does not possess
a K41 cascade of turbulent scales, but
rather consists of
a single large vortex, is postulated
\cite{MajBer01}
to be a fundamental singularity for the Euler equations in time $t < \infty$.
Reasoning against
this singularity is found in \cite{Fri03}.
Recent high resolution simulation on the solution of the
Taylor-Green vortex problem
can be found in \cite{DeBonis} (also see \cite{Bra91}).

The simulation does not show a singularity, but rather an eventual transition
from the isolated vortex to a generalized turbulent flow, as predicted
in \cite{Fri03}. The strictly axisymmetric vortex is unstable and, as the
instability is approached, it is increasingly unstable to non-axisymmetric
perturbations. As an isolated unstable point singularity in the
space of initial conditions, it is ``removable'', and the solution,
starting exactly at this isolated point of initial conditions,
can be extended (nonuniquely) beyond  the singularity.

A related and highly analyzed isolated point of nonuniqueness is given by the Rayleigh-Taylor
instability. A planar surface separates an incompressible
heavy fluid (above) from a light fluid (below). This initial condition is an
isolated unstable point in the space of initial conditions, in that small
perturbations will diverge from it, with the solution depending on the
perturbation. The Rayleigh-Taylor initial conditions are statically
unstable, while the Taylor-Green vortex is dynamically unstable, with its
degree of instability diverging as the critical time approaches.

While the Rayleigh-Taylor static instability point does not require any
reformulation of the uniqueness concepts of solutions, the Taylor-Green
isolated point of dynamic instability should be considered in any formulation
of uniqueness for the solutions of the Euler equations.

Nonuniqueness of solutions of the Euler equations has been established
mathematically for incompressible flow in \cite{DeLS1,DeLS2,Sch93},
and for compressible flow in \cite{CDK,CK,KM} and the references cited therein.
These solutions are very sparse in their
$k$ space representation, and are far removed from any type of energy
cascade and scaling law which could be associated with the K41 turbulence or its
corrections.
From the perspective of fundamental physics, the physical solution should be the one that maximizes
the rate of dissipation of entropy among all possible solutions of the Euler equations.
We emphasize that entropy has the familiar thermodynamic aspect concerning molecular motions ({\it i.e.}, heat released), but
it also includes turbulent fluid fluctuations.
Such a condition is inconvenient to apply and leaves open the statement of a convenient entropy principle.
As indicated earlier, for the incompressible Euler equations, (CKHi) implies constant energy in time for the solution ({\it cf.} \cite{CCFS}).
For the barotropic compressible Euler equations, the energy cannot be constant as it is dissipated (decreases) within a shock wave.
Thus, we propose as a necessary admissibility condition for physically meaningful solutions
of the Euler equations that they should be a strong limit of the Navier-Stokes solutions
with a uniform bound (CKHi).
To our knowledge, the wild solutions do not satisfy either of these conditions.
We note that intermittency bounds as in (CKHi) are insufficient to force maximization of the entropy decay rate,
due to higher order intermittency structures contributing to the entropy. We do not know of a convenient reformulation of
this condition.

At a deeper level, we comment on difficulties with the principle of maximum rate of entropy production as a strengthened
statement of the second law of thermodynamics.
Assessment of this principle for the purpose of validation or verification depends on solutions of a variational principle.
The variational principles for conservative equations ($\delta \mathcal{L}=0$, where $\mathcal{L}$ is the system Lagrangian)
and for dissipative systems (an entropy rate condition) are known,
but a gap in the formulation of mechanics is the absence of a variational formulation for systems that
combine conservative and dissipative components.

The numerical simulation model called Implicit Large Eddy Simulation (ILES) for turbulent flows has a sizable following,
although controversial within the larger turbulent simulation community.
By design, it is located exactly at the marginal decay rate $-\frac{5}{3}$, and further, by design, the associated prefactor
is minimized, so that it is smaller than $\epsilon^{\frac{2}{3}}$.
In an analysis to be published separately, we will present reasons for believing that the ILES solutions fail to maximize
the entropy dissipation rate among the competing (nonunique) solutions of the Euler equations,
or in other words,  that ILES are not physically meaningful.

\bigskip
\smallskip
{\bf Acknowledgments.}
$\,$  The research of
Gui-Qiang G. Chen was supported in part by
the UK
Engineering and Physical Sciences Research Council Award
EP/E035027/1 and
EP/L015811/1, and the Royal Society--Wolfson Research Merit Award (UK).
James Glimm's research was supported in part by
Stanford University (PSAAP/DOE) funding.

\bigskip

\end{document}